\newtheorem{thm}{Theorem}[section]
\newtheorem{lem}[thm]{Lemma}
\newtheorem{rem}{Remark}
\renewcommand{}
\newcommand{\ttau}{\Delta t}
\newcommand{\mS}{\mathcal{S}}
\def\epsilon{\varepsilon} 
\newcommand{\mat}[1]{\boldsymbol{#1}}
\begin{document}
\begin{frontmatter}
\title{
Willmore regularized sharp-interface model for strongly anisotropic solid-state dewetting with axisymmetric geometry: modeling and simulation}

\author[1]{Meng Li}
\address[1]{School of Mathematics and Statistics, Zhengzhou University,
Zhengzhou 450001, China.}
\ead{This author's research was supported by National Natural Science Foundation of China (Nos. 11801527, U23A2065), the China Postdoctoral Science Foundation (No. 2023T160589).
Corresponding author: limeng@zzu.edu.cn. }
\author[1]{Chunjie Zhou}

\begin{abstract}
In this work, we consider the three-dimensional solid-state dewetting with strongly anisotropic surface energy, assuming an axisymmetric morphology of the thin film. 
However, when surface energy exhibits strong anisotropy, certain orientations may be missing from the equilibrium shapes, which will lead to an ill-posed governing equation. 
By incorporating the Willmore energy, we define a regularized total free energy and rigorously derive a sharp-interface model based on thermodynamic variations. 
We further develop a numerical scheme for the sharp-interface model that can preserve two important structural properties, including both the volume-conservation and energy-stability laws. We conclude by presenting a series of numerical simulations that illustrate the accuracy and structure-preserving properties. 
More importantly, extensive numerical simulations clearly demonstrate that our schemes can significantly enhance mesh quality, which is beneficial for long-term computations.

\end{abstract}


\begin{keyword} Solid-state dewetting, 
axisymmetric, strongly anisotropic, Willmore, parametric finite element method,  structure-preserving, mesh quality
\end{keyword}


\end{frontmatter}

\pagestyle{myheadings} \markboth{~}
{}

\section{Introduction}\label{sec1}

Solid-state dewetting (SSD), a widely observed phenomenon in physics and materials science that occurs in solid–solid–vapor systems, could be used to describe the agglomerative process of solid thin films on a substrate.
A solid film adhered to the substrate is inherently unstable or metastable in its as-deposited state due to the influences of surface tension and capillarity. 
This instability can give rise to complex morphological evolutions, such as fingering instabilities \cite{Kan05,Ye10b,Ye11a,Ye11b}, edge retraction \cite{Wong00,dornel2006surface,hyun2013quantitative}, faceting \cite{Jiran90,Jiran92,Ye10a} and pinch-off events \cite{Jiang12,Kim15}.
SSD has found widespread applications in a variety of modern technologies \cite{Mizsei93,Armelao06,Schmidt09}. This broad range of applications has generated considerable interest and motivated extensive efforts to explore and understand its underlying mechanisms, 
encompassing experimental investigations \cite{Jiran90,Jiran92,Ye10a,Ye10b,Ye11a,Amram12,Rabkin14,Herz216,Naffouti16,Naffouti17,Kovalenko17} and theoretical studies \cite{Wong00,Dornel06,hyun2013quantitative,Jiang12,Jiang16,Kim15,Kan05,Srolovitz86a,Srolovitz86,Wang15,baojcm2022,Bao17,Bao17b,Zucker16}.
Various SSD models have been developed for cases involving isotropic surface energy \cite{Srolovitz86,Wong00,Dornel06,Jiang12,Zhao20}. However, the kinetic evolution during SSD is significantly influenced by crystalline anisotropy, as demonstrated by the experiments presented in \cite{Thompson12,Leroy16}.
Gaining a more comprehensive understanding of how crystalline anisotropy affects SSD is essential, as it not only leads to remarkable behaviors but also plays a significant role in utilizing dewetting to create intermediate structures for device fabrication. Accurately modeling SSD in materials with strong crystalline anisotropy remains a challenging problem in materials science, with significant implications for the manufacturing and reliability of nanoscale devices.
In recent years, a variety of approaches have been explored to theoretically investigate the effects of surface energy anisotropy on SSD, as detailed in \cite{Dornel06,Pierre09b,Dufay11,Klinger11shape,Zucker13,Bao17,Jiang16,Wang15,Jiang19a,Zhao19b} and related references. 

Modeling and simulating faceting effects on surfaces is an increasingly complex task in nanotechnology, driven by non-convex and highly anisotropic surface energies, leading to ill-posed surface evolution equations. To address the issue of ill-posedness, a common approach is to regularize the energy with a curvature-dependent term \cite{kohn2007energy,torabi2009new,bao2017stable}, which also aligns with the underlying physical principles. However, this method leads to higher-order partial differential equations for surface variables, presenting considerable difficulties for numerical solutions, especially when developing algorithms that preserve the structure of the surface. A widely employed strategy to handle unstable orientations in such systems is the inclusion of a Willmore regularization term \cite{willmore1993}. Initially
proposed in \cite{angenent1989multiphase}, this regularization technique has been extensively used in many studies of strongly anisotropic
systems \cite{burger2007level,torabi2009new, fonseca2012motion,bao2017stable,chen2018efficient,maxwell2025level}, proving effective in stabilizing the surface evolution dynamics. Bao
et al. \cite{bao2017stable} demonstrated that in the strongly anisotropic setting, the evolution exhibits multiple stable equilibria. The
regularization method introduced in \cite{bao2017stable} may serve as an effective solver for the dynamical problem, which would help
explore the basins of attraction. We in this paper aim to further investigate the energy-stable algorithm for the regularized model of the strongly anisotropic SSD with axisymmetric geometry.

The interface surface that divides the vapor and the thin film is depicted as an open surface $\mS$, bounded by two closed curves, $\Gamma_i$ and $\Gamma_o$, on the substrate.
The original interfacial energy for the three-dimensional SSD can be defined by 
\begin{align}\label{eqn:ener1}
W_0(\mS) = \iint_\mS \gamma_{FV}(\vec N)d\mS - \underbrace{(\gamma_{VS} - \gamma_{FS})A(\Gamma_o/\Gamma_i)}_{\text{Substrate energy}},
\end{align}
where $\gamma_{FV}(\vec N)$ is the surface energy density of the thin film with $\vec N$ representing the unit out normal vector of surface, the constants $\gamma_{VS}$ and $\gamma_{FS}$ denote the surface energy densities of film/substrate and vapor/substrate, and $A(\Gamma_o/\Gamma_i)$ represents the area enclosed by the inner and outer contact lines. 

When in the case of strong anisotropy, the interface governing equation induced by the surface energy will be ill-posed.
To make the interface governing equation well-posed, an efficient method is to add the Willmore energy to the original energy $W_0(\mS)$, given by 
\begin{align}\label{eqn:ener2}
W(\mS) = \iint_\mS \gamma_{FV}(\vec N)d\mS + \underbrace{\frac{\varepsilon ^2}{2}\iint_\mS\kappa_\mS^2d\mS}_{\text{Willmore\, energy}} - \underbrace{(\gamma_{VS} - \gamma_{FS})A(\Gamma_o/\Gamma_i)}_{\text{Substrate energy}},
\end{align}
where $\kappa_\mS$ denotes the mean curvature.

\begin{figure}[!htp]
\centering
\includegraphics[width=0.43\textwidth]{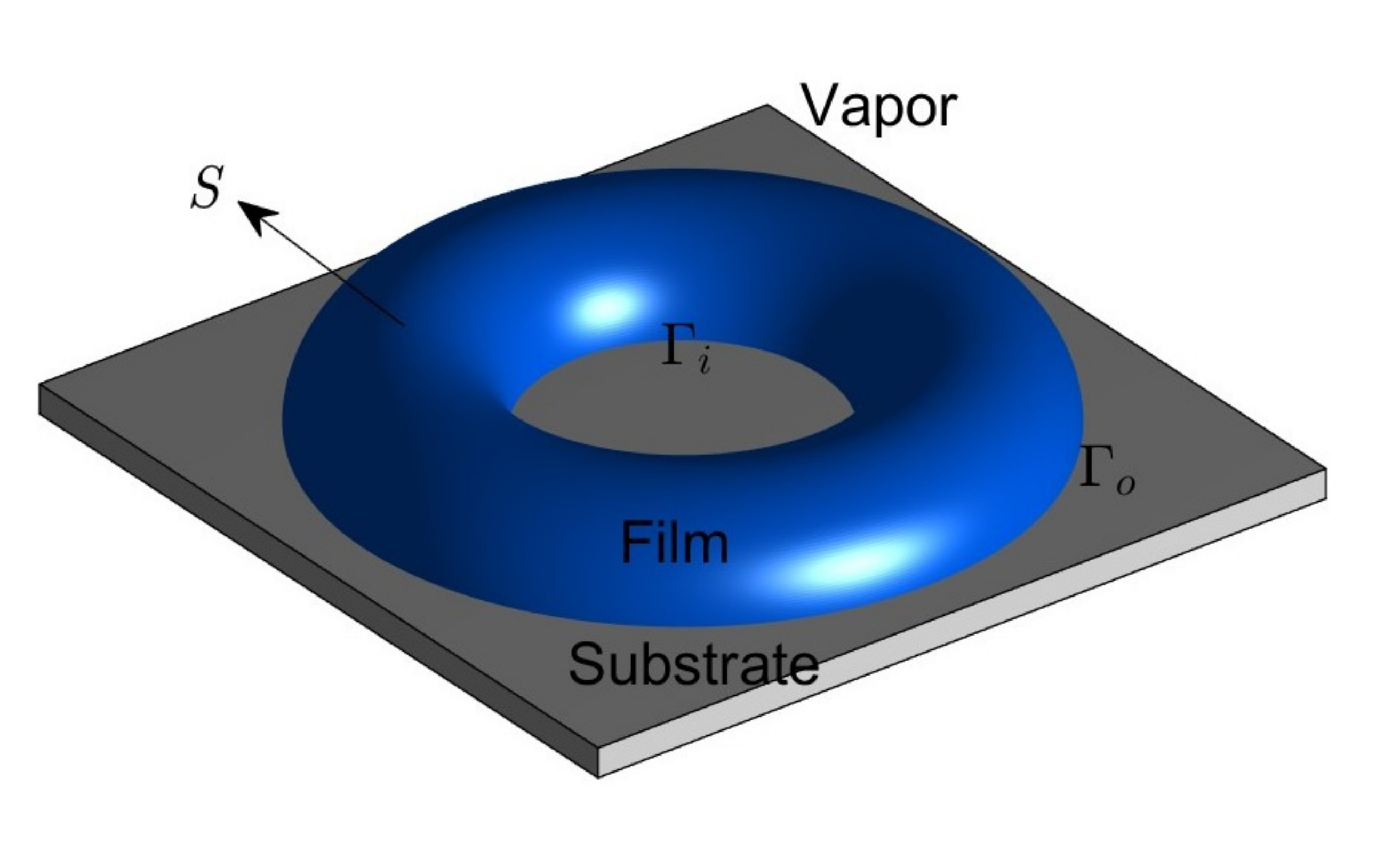}
\includegraphics[width=0.45\textwidth]{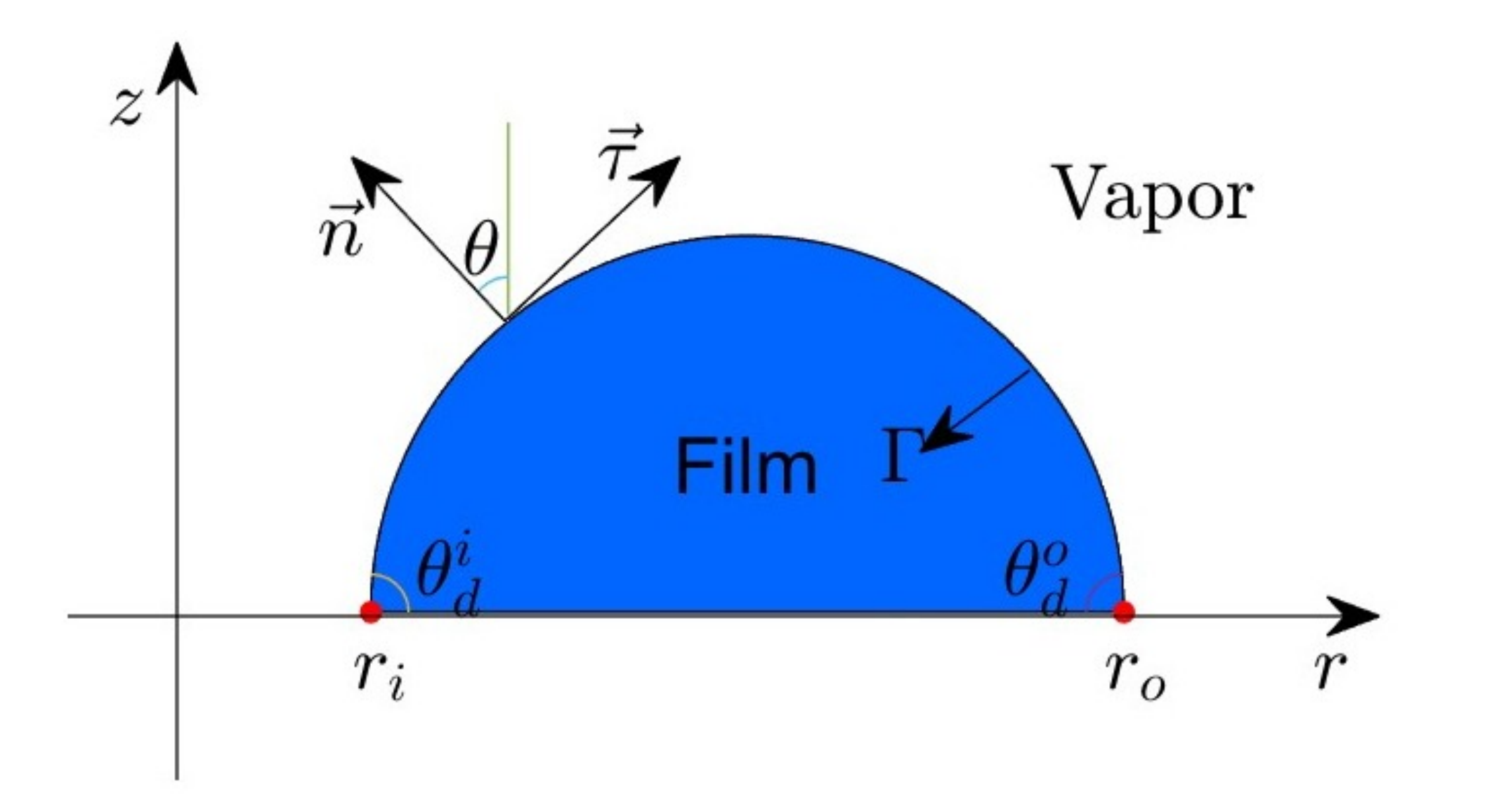}
\caption{A schematic illustration of the solid-state dewetting (left panel) a toroidal thin film on a flat substrate ; The cross-section of an axis-symmetric thin film in the cylindrical coordinate system $(r, z)$, $r_i$ and $r_o$ represent the radius of inner contact line and outer contact line respectively. }
\label{fig:shili}
\end{figure}

In this work, we assume the thin film is axisymmetric during the evolution. As shown in Figure \ref{fig:shili}, an axisymmetric film is situated on a flat substrate.
In this case, we can directly consider the evolution in the radial direction of the film, and the surface $\mS$ can be parameterized as
\begin{equation}
(s, \varphi)\to(\mS, \varphi) := (r(s)\cos\varphi, r(s)\sin\varphi, z(s)), 
\end{equation}
where $r(s)$ denotes the radial distance, $\varphi$ represents the azimuth angle, $z(s)$ is the film height in radial direction, and $s\in[0, L]$ is the arc length of radial direction curve. 
The axial symmetry can reduce this dependence to the orientation of curve in the radial direction.
We simply define the surface energy density of the thin film as \(\gamma(\theta) = \gamma_{FV}(\vec{N})\), where \(\theta\) is given by
\[
\theta = \arctan\left(\frac{z_s}{r_s}\right), \quad \gamma(\theta) = \gamma(-\theta), \quad \forall \theta \in [0, \pi], \quad \gamma(\theta) \in C^2([0, \pi]).
\]
Additionally, let \(r_o\) and \(r_i\) denote the lengths of the outer and inner contact lines, respectively.
Then the total interfacial energy \eqref{eqn:ener2} can be simplified to 
\begin{equation}\label{eqn:ener3}
W(\mS) = \iint_\mS \gamma(\theta)d\mS + \frac{\varepsilon ^2}{2}\iint_\mS\mu_\mS^2d\mS - (\gamma_{VS} - \gamma_{FS})( \pi r_o^2 - \pi r_i^2  ), 
\end{equation}
where 
$\mu_\mS = \kappa - \frac{\vec n\cdot\vec e_1}{r}$ represents the mean curvature in the case of axial symmetry, with $\kappa$ denoting the curvature,  $\vec n$ the outward unit normal vector in radial direction curve and $\vec e_1$ the unit vector along r-coordinate. If we denote $\Gamma := \vec X(s) = (r(s), z(s))$ as the open curve in radial direction, there holds
$\kappa = \vec X_{ss}\cdot\vec n$ with $\vec n = -\vec X_s^\bot $. For convenience, we introduce a time-independent variable $\rho\in\mathbb I = [0, 1]$ to parameterize the generating curve $\vec X(s)$ :
\begin{align}
\Gamma := \vec X(\rho) = (r(\rho), z(\rho)) : \mathbb I \to \mathbb R^2. 
\end{align}
According to the parametrization, the arc length s can be expressed as $s(\rho, t) = \int_0^\rho| \vec X_q| dq$. Furthermore, by differentiating both sides with respect to the parameter $\rho$, we have $s_\rho = | \vec X_\rho|$ and $ds = s_\rho d\rho = | \vec X_q | d\rho$.

The primary objective of this paper is to derive and numerically investigate a regularized sharp-interface model for strongly anisotropic SSD, assuming axisymmetric shapes, through thermodynamic variational principles for a new-defined interfacial energy. 
In \cite{Jiang19a}, 
using a Cahn–Hoffman $\xi$-vector formulation, based on the thermodynamic variation and smooth vector-field perturbation method, a sharp-interface model with weakly anisotropic surface energies was derived. Then, a PFEM was proposed for the sharp-interface model. However, the numerical method cannot be proved to be area-conservative and energy-stable. 
Based on thermodynamic variation principles, Zhao \cite{Zhao19} derived a sharp-interface model for the weakly anisotropic SSD of thin films on a flat substrate, assuming that the film morphology is axisymmetric. Similarly, the associated PFEM still lacks proof of its structure-preserving properties. 
In \cite{Zhao19b,Jiang19c}, two types of PFEMs were proposed for solving the
 morphological evolution of SSD of thin films on a at rigid substrate in three dimensions. 
In the aforementioned references, the governing equations for interface evolution were derived and numerically implemented; however, the numerical schemes do not exhibit structure-preserving properties.
Subsequently, Li and Bao \cite{li2021energy} proposed an energy-stable PFEM for surface diffusion flow and SSD with weakly anisotropic surface energy. Later, Li et al. \cite{li2023symmetrized} introduced an area-conservative and energy-stable PFEM for both weakly and strongly anisotropic SSD.
In \cite{li2024structure2}, we developed several structure-preserving algorithms for axisymmetric SSD with weakly and strongly anisotropic surface energies. 

When the surface stiffness \( H_{\gamma}(\vec{n})\tau \cdot \tau = \gamma(\theta) + \gamma''(\theta) < 0 \) for certain orientations \(\theta\), sharp corners may form in the equilibrium shape, corresponding to the strongly anisotropic case. In this case, the corresponding sharp-interface control equation becomes ill-posed. To resolve this issue, by adding Willmore regularization terms, the authors \cite{Jiang19a} constructed a regularized sharp-interface model for simulating SSD in two dimensions. A parametric finite element approximation was then developed based on the regularized system. This regularized method has been extensively developed in the literature (see \cite{burger2007level, torabi2009new, fonseca2012motion, bao2017stable, chen2018efficient, maxwell2025level} and the references therein).
Despite its advantages, the Willmore-regularized sharp-interface model is highly complex, making it challenging to construct energy-stable schemes for the system.
In \cite{li2024energy}, 
by introducing two geometric
 relations inspired by \cite{bao2024energy}, 
we innovatively constructed an energy-stable PFEM for the Willmore-regularized sharp-interface model. However, there is currently no published work focusing on the regularized system for three-dimensional strongly anisotropic SSD, including both the model estabishment and the numerical simulation.
In this work, considering a special axisymmetric case, we regularize the total interfacial energy by introducing the well-known Willmore energy, which leads to a new set of interface governing equations for the strongly anisotropic SSD. 
This regularization ensures that the resulting sharp-interface model is well-posed. We introduce two surface energy matrices with $\theta$ as the variable. By combining two geometric relations in the axisymmetric version, we successfully construct a structure-preserving parametric finite element approximation for the Willmore-regularized system. Finally, we present several numerical simulations to demonstrate the accuracy, efficiency, and structure-preserving properties of the proposed numerical methods.

The rest of paper is organized as follows. In Section 2, we derive a sharp-interface model of SSD with strong anisotropies based on thermodynamic variations. In Section 3, We can obtain an equivalent sharp interface model using two geometric relations. Next, in Section 4, we present a weak formulation for this model and prove that the energy dissipation and volume conservation hold under this weak formulation. In Section 5, we construct a structure-preserving PFEM, and prove its energy stability and volume conservation. Subsequently, a large number of numerical experiments are presented in Section 6. Finally, we draw some conclusions in Section 7.

\section{The sharp-interface model}\label{sec2}
Given the parametrization of the surface $\mS(s, \varphi)$, we can directly calculate the two tangential vectors as follows
\begin{align}\label{eqn:tangential}
\mathcal T_1 = (r_s\cos \varphi, r_s\sin \varphi, z_s), \qquad \mathcal T_2 = (-r\sin \varphi, r\cos \varphi, 0). 
\end{align}
Then we can obtain the unit outer normal vector of the surface 
\begin{equation}
\vec N = \frac{\mathcal T_1 \times \mathcal T_2}{\left | \mathcal T_1 \times \mathcal T_2 \right | } = (-z_{s}\cos \varphi, -z_{s}\sin \varphi, r_{s}). 
\end{equation}
We consider $s = x_{1}$ and $\varphi = x_{2}$ as two parameters of the surface. Then the first fundamental form is given by 
\begin{equation}
I = Eds^2 + 2Fdsd\varphi + Gd\varphi ^2, 
\end{equation}
with $E = r_{s}^2 + z_{s}^2 = 1$, $F = 0$, $G = r^2$. Or it can be written in the metric tensor notation as 
\begin{equation}
(g_{ij}) = \begin{pmatrix}
 1 & 0\\
 0 & r^2
\end{pmatrix}, \quad g = r^2. 
\end{equation}

Let \(\mS^{\,\varepsilon}\) represent a small axisymmetric perturbation of the surface \(\mS\), defined by
\[
\mS^{\,\varepsilon} := \left(r^{\,\varepsilon}(s) \cos \varphi, r^{\,\varepsilon}(s) \sin \varphi, z^{\,\varepsilon}(s)\right).
\]
Since this surface is axially symmetric, the perturbed surface is generated by rotating the perturbed generating curve of the original surface. This perturbation in $(r,z)$ coordinates can be written as the following vector form: 
\begin{equation}
\Gamma^{\,\varepsilon  } := \vec X^{\,\varepsilon  } = (r^{\,\varepsilon  }(s), z^{\,\varepsilon  }(s)) = \vec X + \varepsilon  (r^{(1)}, z^{(1)}) = \vec X + \varepsilon  \vec X^{(1)},  
\end{equation}
where $\varepsilon$ denotes a perturbation parameter and $(r^{(1)}, z^{(1)})\in (\text{Lip}[0, L])^2$ represent the perturbations in the radial and axial directions, respectively. 
Assume that $g^{\,\varepsilon  }$ is the metric tensor of the perturbed surface $\mS^{\,\varepsilon}$, then it holds $g^{\,\varepsilon  } = [(r_s^{\,\varepsilon  })^2 + (z_s^{\,\varepsilon  })^2](r^{\,\varepsilon  })^2$. 
The total interfacial energy after perturbation can be represented as
\begin{align}
W^\varepsilon   
&= 
\iint_{\mS^\varepsilon  } \gamma(\theta^{\,\varepsilon  })d\mS^{\,\varepsilon  } + \underbrace{\frac{\varepsilon ^2}{2}\iint_{\mS^{\,\varepsilon  }} (\mu_\mS^{\,\varepsilon  })^2 d\mS^{\,\varepsilon  }}_{\text{Willmore\, energy}}   - \underbrace{(\gamma_{VS}-\gamma_{FS})\left[\pi (r^{\,\varepsilon  }_o)^2 - \pi (r^{\,\varepsilon  }_i)^2\right]}_{\text{Substrate\, energy}}  \nonumber \\
&= \int_{0}^{2\pi}\int_{0}^{L}\gamma(\theta ^{\,\varepsilon  })\sqrt{g^{\,\varepsilon  }}dsd\varphi + \frac{\varepsilon ^2}{2}\int_{0}^{2\pi}\int_{0}^{L}(\mu_\mS^{\,\varepsilon  })^2\sqrt{g^{\,\varepsilon  }}dsd\varphi - (\gamma_{VS}-\gamma_{FS})\left[\pi r^{\,\varepsilon  }(L)^2 - \pi r^{\,\varepsilon  }(0)^2\right] \nonumber \\
&= 
2\pi\int_{0}^{L}\gamma(\theta ^{\,\varepsilon  })\left | \vec X_s^{\,\varepsilon  } \right | r^{\,\varepsilon  } ds + \varepsilon ^2\pi\int_{0}^{L}(\mu_\mS^{\,\varepsilon  })^2\left | \vec X_s^{\,\varepsilon  } \right | r^{\,\varepsilon  } ds - (\gamma_{VS}-\gamma_{FS})\left[\pi r^{\,\varepsilon  }(L)^2 - \pi r^{\,\varepsilon  }(0)^2\right].
\end{align}

The unit tangential vector and outer normal vector of the curve are given as $\vec\tau = \vec X_s$ and $\vec n = -\vec\tau^\bot = -\vec X_s^\bot$, where $(\cdot)^\bot$ represents a 90-degree clockwise rotation of a vector. For later use, we present the Taylor expansions of the following terms at $\varepsilon = 0$: 
\begin{subequations}
\begin{align}
&r^{\,\varepsilon  } = r + r^{(1)}\varepsilon   + O(\varepsilon   ^2), \label{Taile1}\\
&\gamma(\theta ^{\,\varepsilon}) = \gamma(\theta) + \gamma'(\theta)(\vec X_s^{(1)}\cdot\vec n)\,\varepsilon + O(\varepsilon ^2), \label{Taile2}\\
&\left | \vec X_s^{\,\varepsilon} \right | = 1 + (\vec X_s^{(1)}\cdot\vec\tau)\varepsilon + O(\varepsilon ^2), \label{Taile3}\\
&\kappa^{\,\varepsilon} = \kappa + \left[\theta_s^{(1)} - \theta_s(\vec X_s^{(1)}\cdot\vec\tau)\right]\,\varepsilon + O(\varepsilon ^2), \label{Taile4}\\
&\vec n^{\,\varepsilon} = \vec n + \left[-(\vec X_s^{(1)}\cdot\vec\tau)\vec n - (\vec X_s^{(1)})^\bot\right]\,\varepsilon + O(\varepsilon ^2). \label{Taile5}
\end{align}
\end{subequations}
By using \eqref{Taile1}-\eqref{Taile5}, we can calculate the first variation of the total interfacial energy, given by 
\begin{align}\label{W^(1)}
W^{(1)} 
&= 2\pi\int_{0}^{L}\left[\gamma'(\theta)(\vec X_s^{(1)}\cdot\vec n)r + \gamma(\theta)(\vec X_s^{(1)}\cdot\tau)r + \gamma(\theta)r^{(1)}\right] ds \nonumber \\
&~~~~+ \varepsilon ^2\pi\int_{0}^{L}\left[r^{(1)}\mu_\mS^2 + r\mu_\mS^2(\vec X_s^{(1)}\cdot\vec\tau)\right] ds \nonumber \\
&~~~~+ 2\varepsilon ^2\pi\int_{0}^{L} 
\left[r\theta_s^{(1)}\mu_\mS - r\theta_s(\vec X_s^{(1)}\cdot\vec\tau)\mu_\mS\right] ds \nonumber \\
&~~~~+ 2\varepsilon ^2\pi\int_{0}^{L}
\left[\vec n\cdot\vec e_1(\vec X_s^{(1)}\cdot\vec\tau)\mu_\mS + (\vec X_s^{(1)})^\bot\cdot\vec e_1\mu_\mS + \frac{\vec n\cdot\vec e_1 r^{(1)}}{r}\mu_\mS\right] ds \nonumber \\
&~~~~- 2\pi(\gamma_{VS} - \gamma_{FS})\left[r(L)r^{(1)}(L) - r(0)r^{(1)}(0)\right]  \nonumber \\
&=: \mathbb{I}_1 + \mathbb{I}_2 + \mathbb{I}_3 + \mathbb{I}_4 + \mathbb{I}_5. 
\end{align}
Using the relationships below 
\begin{align*}
&\kappa = \vec X_{ss}\cdot\vec n = \theta_s, \quad \vec n_s = -\kappa\,\vec \tau, \quad \vec \tau_s = \kappa\,\vec n, \quad r^{(1)} = r_s\,\vec X^{(1)}\cdot\vec\tau - z_s\,\vec X^{(1)}\cdot\vec n, \\
&\vec e_1 = (\vec\tau\cdot\vec e_1)\,\vec\tau + (\vec n\cdot\vec e_1)\,\vec n, \quad \vec n^\bot = \vec\tau, \quad \vec\tau^\bot = -\vec n,
\end{align*}
and by applying integration by parts in \eqref{W^(1)}, we can obtain
\begin{subequations}
\begin{align}
\mathbb{I}_1 &= 2\pi\int_{0}^{L}\left[\gamma'(\theta)(\vec X_s^{(1)}\cdot\vec n)r + \gamma(\theta)(\vec X_s^{(1)}\cdot\vec\tau)r + \gamma(\theta)r^{(1)}\right] ds \nonumber \\
&= 2\pi\int_{0}^{L}\left[ -(\gamma(\theta)\vec\tau + \gamma'(\theta)\vec n)_s\cdot\vec X^{(1)}r - (\gamma(\theta)\vec\tau + \gamma'(\theta)\vec n)\cdot\vec X^{(1)}r_s + \gamma(\theta)r^{(1)} \right] ds \nonumber \\
&~~~~ +2\pi\left[ (\gamma(\theta)\vec \tau + \gamma'(\theta)\vec n)\cdot\vec X^{(1)}r \right]\bigg|_{s=0}^{s=L} \nonumber \\
&= 2\pi\int_{0}^{L}\left[ -(\gamma(\theta) + \gamma''(\theta))\kappa r(\vec X^{(1)}\cdot\vec n) - (\gamma(\theta)z_s + \gamma'(\theta)r_s)(\vec X^{(1)}\cdot\vec n) \right]ds \nonumber \\
&~~~~ +2\pi\left[ (\gamma(\theta)\vec \tau + \gamma'(\theta)\vec n)\cdot\vec X^{(1)}r \right]\bigg|_{s=0}^{s=L}, \\
\mathbb{I}_2 &= \varepsilon ^2\pi\int_{0}^{L}\left[r^{(1)}\mu_\mS^2 + r\mu_\mS^2(\vec X_s^{(1)}\cdot\vec\tau)\right] ds \nonumber \\
&= \varepsilon ^2\pi\int_{0}^{L}\left[ r_s\,\mu_\mS^2(\vec X^{(1)}\cdot\vec\tau) - z_s\,\mu_\mS^2(\vec X^{(1)}\cdot\vec n) - (r\,\mu_\mS^2\,\vec\tau)_s\cdot\vec X^{(1)}\right] ds + \varepsilon ^2\pi\left[ r\,\mu_\mS^2\,(\vec X^{(1)}\cdot\vec\tau) \right]\bigg|_{s=0}^{s=L} \nonumber \\
&= \varepsilon ^2\pi\int_{0}^{L}\left[ -(z_s\,\mu_\mS^2 - r\,\kappa\,\mu_\mS^2)(\vec X^{(1)}\cdot\vec n) -2r\,\mu_\mS\,(\mu_\mS)_s(\vec X^{(1)}\cdot\vec\tau) \right] ds + \varepsilon ^2\pi\left[ r\,\mu_\mS^2\,(\vec X^{(1)}\cdot\vec\tau) \right]\bigg|_{s=0}^{s=L}, \\
\mathbb{I}_3 &= 2\varepsilon ^2\pi\int_{0}^{L} 
\left[r\,\theta_s^{(1)}\mu_\mS - r\,\theta_s(\vec X_s^{(1)}\cdot\vec\tau)\,\mu_\mS\right] ds \nonumber \\
&= \varepsilon ^2\pi\int_{0}^{L}\left[ (2r\,\kappa\,\mu_\mS\,\vec\tau)_s\cdot\vec X^{(1)} -(2r\,\mu_\mS)_s\,\theta^{(1)} \right] ds + \left[ 2r\,\mu_\mS\,\theta^{(1)} - 2r\,\kappa\,\mu_\mS(\vec X^{(1)}\cdot\vec\tau) \right]\bigg|_{s=0}^{s=L} \nonumber \\
&= \varepsilon ^2\pi\int_{0}^{L}\left[ (2r\,\kappa\,\mu_\mS\,\vec\tau)_s\cdot\vec X^{(1)} + (2r_s\,\mu_\mS\,\vec n)_s\cdot\vec X^{(1)} + (2r\,(\mu_\mS)_s\,\vec n)_s\cdot\vec X^{(1)} \right] ds \nonumber \\
&~~~~ + \varepsilon ^2\pi\left[ 2r\,\mu_\mS\,\theta^{(1)} - 2r\,\kappa\,\mu_\mS(\vec X^{(1)}\cdot\vec\tau) - 2r_s\,\mu_\mS\,(\vec X^{(1)}\cdot\vec n) - (2r\,(\mu_\mS)_s\,(\vec X^{(1)}\cdot\vec n))
\right]\bigg|_{s=0}^{s=L} \nonumber \\
&= \varepsilon ^2\pi\int_{0}^{L}\left[ (2r_{ss}\mu_\mS + 4r_s(\mu_\mS)_s + 2r\,(\mu_\mS)_{ss} + 2r\,\kappa^2\,\mu_\mS )(\vec X\cdot\vec n) +2r\,\kappa_s\,\mu_\mS (\vec X^{(1)}\cdot\vec \tau) \right] ds \nonumber \\
&~~~~ + \varepsilon ^2\pi\left[ 2r\,\mu_\mS\,\theta^{(1)} - 2r\,\kappa\,\mu_\mS(\vec X^{(1)}\cdot\vec\tau) - 2r_s\,\mu_\mS\,(\vec X^{(1)}\cdot\vec n) - (2r\,(\mu_\mS)_s\,(\vec X^{(1)}\cdot\vec n))
\right]\bigg|_{s=0}^{s=L},  \\
\mathbb{I}_4 &= 2\varepsilon ^2\pi\int_{0}^{L}
\left[\vec n\cdot\vec e_1(\vec X_s^{(1)}\cdot\vec\tau)\mu_\mS + (\vec X_s^{(1)})^\bot\cdot\vec e_1\mu_\mS + \frac{\vec n\cdot\vec e_1 r^{(1)}}{r}\mu_\mS\right] ds \nonumber \\
&= \varepsilon ^2\pi\int_{0}^{L}\left[ -(2\vec n\cdot\vec e_1\,\vec\tau\,\mu_\mS)_s\cdot\vec X^{(1)} -2(\vec X^{(1)})^\bot\cdot\vec e_1\,(\mu_\mS)_s + 2\frac{\vec n\cdot\vec e_1\,r_s\,\mu_\mS}{r}(\vec X^{(1)}\cdot\vec\tau) - 2\frac{\vec n\cdot\vec e_1\,z_s\,\mu_\mS}{r}(\vec X^{(1)}\cdot\vec n) \right] ds \nonumber \\
&~~~~ + \varepsilon ^2\pi\left[2\vec n\cdot\vec e_1\, 
\mu_\mS\,(\vec X^{(1)}\cdot\vec\tau) + 2(\vec X^{(1)})^\bot\cdot\vec e_1\,\mu_\mS\right]\bigg|_{s=0}^{s=L} \nonumber \\
&= \varepsilon ^2\pi\int_{0}^{L}\left[ -(2\vec n\cdot\vec e_1\,\vec\tau\,\mu_\mS)_s\cdot\vec X^{(1)} -2(\vec X^{(1)})^\bot\cdot((\vec\tau\cdot\vec e_1)\,\vec\tau + (\vec n\cdot\vec e_1)\,\vec n)\,(\mu_\mS)_s\right] \nonumber \\
&~~~~+ \varepsilon ^2\pi\int_{0}^{L}\left[2\frac{\vec n\cdot\vec e_1\,r_s\,\mu_\mS}{r}(\vec X^{(1)}\cdot\vec\tau) - 2\frac{\vec n\cdot\vec e_1\,z_s\,\mu_\mS}{r}(\vec X^{(1)}\cdot\vec n) \right] ds  + \varepsilon ^2\pi\left[2\vec n\cdot\vec e_1\, 
\mu_\mS\,(\vec X^{(1)}\cdot\vec\tau) + 2(\vec X^{(1)})^\bot\cdot\vec e_1\,\mu_\mS\right]\bigg|_{s=0}^{s=L} \nonumber \\
&= \varepsilon ^2\pi\int_{0}^{L}\left[ (2z_s\,\kappa\,\mu_\mS + 2\frac{z_s^2\,\mu_\mS}{r} -2r_s\,(\mu_\mS)_s )(\vec X^{(1)}\cdot\vec n) + (2r_s\,\kappa\,\mu_\mS -2\frac{z_s\,r_s\,\mu_\mS}{r})(\vec X\cdot\vec\tau) \right] ds \nonumber \\
&~~~~ + \varepsilon ^2\pi\left[2\vec n\cdot\vec e_1\, 
\mu_\mS\,(\vec X^{(1)}\cdot\vec\tau) + 2(\vec X^{(1)})^\bot\cdot\vec e_1\,\mu_\mS \right]\bigg|_{s=0}^{s=L}, \nonumber \\
\mathbb I_5 &= - 2\pi(\gamma_{VS} - \gamma_{FS})\left[r(L)r^{(1)}(L) - r(0)r^{(1)}(0)\right]. 
\end{align}
\end{subequations}
Summing the five terms above, we have
\begin{align}
W^{(1)} &= \mathbb{I}_1 + \mathbb{I}_2 + \mathbb{I}_3 + \mathbb{I}_4 +\mathbb{I}_5 \nonumber \\
&= 2\pi\int_{0}^{L}\left[ -(\gamma(\theta) + \gamma''(\theta))\kappa r(\vec X^{(1)}\cdot\vec n) - (\gamma(\theta)z_s + \gamma'(\theta)r_s)(\vec X^{(1)}\cdot\vec n) \right]ds \nonumber \\
&~~~~ + \pi\int_{0}^{L}\left[ \varepsilon ^2(2r\,\kappa_s\,\mu_\mS + 2r_s\,\kappa\,\mu_\mS - 2\frac{z_s\,r_s\,\mu_\mS}{r} - 2r\,\mu_\mS\,(\mu_\mS)_s)(\vec X^{(1)}\cdot\vec\tau) \right] ds \nonumber \\ 
&~~~~ + \pi\int_{0}^{L}\left[ \varepsilon ^2(-z_s\,\mu_\mS^2 + 2r_{ss}\,\mu_\mS + 2r_s\,(\mu_\mS)_s + 2r\,(\mu_\mS)_{ss} + 2r\kappa^2\,\mu_\mS + 2z_s\,\kappa\,\mu_\mS + 2\frac{z_s^2\,\mu_\mS}{r} - r\,\kappa\,\mu_\mS^2)(\vec X^{(1)}\cdot\vec n) \right] ds \nonumber \\ 
&~~~~ + \left[ 2\pi(\gamma(\theta)\vec \tau + \gamma'(\theta)\vec n)\cdot\vec X^{(1)}r + \varepsilon ^2\pi\left(r\,\mu_\mS^2\,(\vec X^{(1)}\cdot\vec\tau) + 2\vec n\cdot\vec e_1\, 
\mu_\mS\,(\vec X^{(1)}\cdot\vec\tau) + 2(\vec X^{(1)})^\bot\cdot\vec e_1\,\mu_\mS\right)\right]\bigg|_{s=0}^{s=L} \nonumber \\
&~~~~ + \varepsilon ^2\pi\left[ 2r\,\mu_\mS\,\theta^{(1)} - 2r\,\kappa\,\mu_\mS(\vec X^{(1)}\cdot\vec\tau) - 2r_s\,\mu_\mS\,(\vec X^{(1)}\cdot\vec n) - (2r\,(\mu_\mS)_s\,(\vec X^{(1)}\cdot\vec n))
\right]\bigg|_{s=0}^{s=L}  \nonumber  \\
&~~~~ - 2\pi(\gamma_{VS} - \gamma_{FS})\left[r(L)r^{(1)}(L) - r(0)r^{(1)}(0)\right]. 
\end{align}
Noting the relationships 
\begin{align*}
    &r_{ss} = \vec X_{ss}\cdot\vec e_1 = (\vec X_{ss}\cdot\vec n)(\vec n\cdot\vec e_1) = \kappa\,(\vec n\cdot\vec e_1) = -z_s\,\kappa,\\
    &z_{ss} = (\vec X_{ss})^\bot\cdot\vec e_1 = ((\vec X_{ss})^\bot\cdot\vec \tau)(\vec \tau\cdot\vec e_1) = (\vec X_{ss}\cdot\vec n)(\vec \tau\cdot\vec e_1) = r_s\,\kappa,
\end{align*}
and denoting $\kappa_m:=\kappa(\mu_\mS - \kappa)$,  we can obtain the first variation of the total interfacial energy, given by 
\begin{align}
W^{(1)} &= 2\pi\int_{0}^{L}\left[ -(\gamma(\theta) + \gamma''(\theta))\kappa r(\vec X^{(1)}\cdot\vec n) - (\gamma(\theta)z_s + \gamma'(\theta)r_s)(\vec X^{(1)}\cdot\vec n) \right]ds \nonumber \\
&~~~~ + \pi\int_{0}^{L}\left[ \varepsilon ^2(r\,\mu_\mS^3 + 2r_s\,(\mu_\mS)_s + 2r\,(\mu_\mS)_{ss} + 4r\,\kappa^2\mu_\mS - 4r\,\kappa\,\mu_\mS^2 )(\vec X^{(1)}\cdot\vec n) \right]ds \nonumber \\ 
&~~~~ + \left[ 2\pi(\gamma(\theta)\vec \tau + \gamma'(\theta)\vec n)\cdot\vec X^{(1)}r + \varepsilon ^2\pi\left(r\,\mu_\mS^2\,(\vec X^{(1)}\cdot\vec\tau) + 2\vec n\cdot\vec e_1\, 
\mu_\mS\,(\vec X^{(1)}\cdot\vec\tau) + 2(\vec X^{(1)})^\bot\cdot\vec e_1\,\mu_\mS\right)\right]\bigg|_{s=0}^{s=L} \nonumber \\
&~~~~ + \varepsilon ^2\pi\left[ 2r\,\mu_\mS\,\theta^{(1)} - 2r\,\kappa\,\mu_\mS(\vec X^{(1)}\cdot\vec\tau) - 2r_s\,\mu_\mS\,(\vec X^{(1)}\cdot\vec n) - (2r\,(\mu_\mS)_s\,(\vec X^{(1)}\cdot\vec n))
\right]\bigg|_{s=0}^{s=L}  \nonumber  \\
&~~~~ - 2\pi(\gamma_{VS} - \gamma_{FS})\left[r(L)r^{(1)}(L) - r(0)r^{(1)}(0)\right] \nonumber \\
&= 2\pi\int_{0}^{L}\left[ -(\gamma(\theta) + \gamma''(\theta))\kappa r - (\gamma(\theta)z_s + \gamma'(\theta)r_s) + \varepsilon ^2((r\,(\mu_\mS)_s)_s - 2r\,\mu_\mS\,\kappa_m + \frac{1}{2}r\,\mu_\mS^3 ) \right](\vec X^{(1)}\cdot\vec n)ds \nonumber \\
&~~~~ + \left[ 2\pi(\gamma(\theta)\vec \tau + \gamma'(\theta)\vec n)\cdot\vec X^{(1)}r + \varepsilon ^2\pi\left(r\,\mu_\mS^2\,(\vec X^{(1)}\cdot\vec\tau) + 2\vec n\cdot\vec e_1\, 
\mu_\mS\,(\vec X^{(1)}\cdot\vec\tau) + 2(\vec X^{(1)})^\bot\cdot\vec e_1\,\mu_\mS\right)\right]\bigg|_{s=0}^{s=L} \nonumber \\
&~~~~ + \varepsilon ^2\pi\left[ 2r\,\mu_\mS\,\theta^{(1)} - 2r\,\kappa\,\mu_\mS(\vec X^{(1)}\cdot\vec\tau) - 2r_s\,\mu_\mS\,(\vec X^{(1)}\cdot\vec n) - (2r\,(\mu_\mS)_s\,(\vec X^{(1)}\cdot\vec n))
\right]\bigg|_{s=0}^{s=L}  \nonumber  \\
&~~~~ - 2\pi(\gamma_{VS} - \gamma_{FS})\left[r(L)r^{(1)}(L) - r(0)r^{(1)}(0)\right]. 
\end{align}
To ensure energy dissipation for any perturbation, 
we must require  $(2r\,\mu_\mS\,\theta^{(1)})\bigg|_{s=0}^{s=L}=0$. Therefore, we need to impose a zero curvature condition:
\begin{align}
    \mu_\mS(0, t)=0,\qquad \mu_\mS(L, t) = 0,\qquad t\geq 0.
\end{align}
Because the substrate is flat and the contact points move along the substrate. The perturbation velocity field at the contact points should satisfy $\vec X^{(1)}(s = 0)//\vec e_1$ and $\vec X^{(1)}(s = L)//\vec e_1$. By utilizing the zero curvature condition 
and the following two contact point conditions: 
\begin{align}
&\vec \tau \big|_{s = 0} = (\cos\theta_d^{\,i}, \sin\theta_d^{\,i}), \qquad \vec n \big|_{s = 0} = (-\sin\theta_d^{\,i}, \cos\theta_d^{\,i}), \qquad \vec X^{(1)}\big|_{s = 0} = (r^{(1)}(0), 0),  \nonumber \\
&\vec \tau \big|_{s = L} = (\cos\theta_d^{\,o}, \sin\theta_d^{\,o}), \qquad \vec n \big|_{s = L} = (-\sin\theta_d^{\,o}, \cos\theta_d^{\,o}), \qquad \vec X^{(1)}\big|_{s = L} = (r^{(1)}(L), 0),  \nonumber 
\end{align}
we can obtain 
\begin{align}
W^{(1)} &= 2\pi\int_{0}^{L}\left[ -(\gamma(\theta) + \gamma''(\theta))\kappa r - (\gamma(\theta)z_s + \gamma'(\theta)r_s) + \varepsilon ^2((r\,(\mu_\mS)_s)_s - 2r\,\mu_\mS\,\kappa_m + \frac{1}{2}r\,\mu_\mS^3 ) \right](\vec X^{(1)}\cdot\vec n)ds \nonumber \\
&~~~~ + 2\pi r(L)r^{(1)}(L)\left[ \gamma(\theta_d^{\,o})\cos\theta_d^{\,o} - \gamma'(\theta_d^{\,o})\sin\theta_d^{\,o} - (\gamma_{VS} - \gamma_{FS}) - \varepsilon ^2(\mu_\mS)_s n_1) \right] \nonumber \\
&~~~~ - 2\pi r(0)r^{(1)}(0)\left[ \gamma(\theta_d^{\,i})\cos\theta_d^{\,i} - \gamma'(\theta_d^{\,i})\sin\theta_d^{\,i} - (\gamma_{VS} - \gamma_{FS}) - \varepsilon ^2(\mu_\mS)_s n_1) \right] \nonumber \\ 
&= \iint_{\mS}\left[ -(\gamma(\theta) + \gamma''(\theta))\kappa - \frac{\gamma(\theta)z_s + \gamma'(\theta)r_s}{r} +\varepsilon ^2\frac{(r(\mu_\mS)_s)_s - 2r\,\mu_\mS\,\kappa_m + \frac{1}{2}r\,\mu_\mS^3 }{r} \right](\vec X^{(1)}\cdot\vec n)d\mS \nonumber \\
&~~~~ + \int_{\Gamma_o} r^{(1)}(L)\left[ \gamma(\theta_d^{\,o})\cos\theta_d^{\,o} - \gamma'(\theta_d^{\,o})\sin\theta_d^{\,o} - (\gamma_{VS} - \gamma_{FS} ) - \varepsilon ^2(\mu_\mS)_s n_1 \right]d\,\Gamma \nonumber \\
&~~~~ - \int_{\Gamma_i} r^{(1)}(0)\left[ \gamma(\theta_d^{\,i})\cos\theta_d^{\,i} - \gamma'(\theta_d^{\,i})\sin\theta_d^{\,i} - (\gamma_{VS} - \gamma_{FS}) - \varepsilon ^2(\mu_\mS)_s n_1\right]d\,\Gamma. 
\end{align}
Then we can directly derive the variation of total energy in relation to the surface and the two contact lines as follows:
\begin{subequations}\label{bianhua}
\begin{align} 
&\frac{\delta W}{\delta \mS} = -(\gamma(\theta) + \gamma''(\theta))\kappa - \frac{\gamma(\theta)\, z_s + \gamma'(\theta)\, r_s}{r} + \varepsilon ^2\frac{ (r\,(\mu_\mS)_s)_s -2r\,\mu_\mS\,\kappa_m + \frac{1}{2}r\,\mu_\mS^3}{r},  \\
&\frac{\delta W}{\delta \Gamma_o} = \gamma(\theta_d^{\,o})\cos\theta_d^{\,o} - \gamma'(\theta_d^{\,o})\sin\theta_d^{\,o} - (\gamma_{VS} - \gamma_{FS}) - \varepsilon ^2(\mu_\mS)_s n_1,  \\
&\frac{\delta W}{\delta \Gamma_i} = -\left[\gamma(\theta_d^{\,i})\cos\theta_d^{\,i} - \gamma'(\theta_d^{\,i})\sin\theta_d^{\,i} - (\gamma_{VS} - \gamma_{FS}) - \varepsilon ^2(\mu_\mS)_s n_1\right]. 
\end{align} 
\end{subequations}
\begin{rem}
   The variation \eqref{bianhua} is based on the thin film with holes, which has two contact lines. For the film without holes, it does not have internal contact lines, so the following equations must be satisfied at the boundary: 
   \begin{equation}
      \vec X^{(1)}\bigg|_{s = 0} = (0, z^{(1)}(0)), \qquad \vec X^{(1)}\bigg|_{s = L} = (r^{(1)}(L), 0). 
   \end{equation}
   When this film is hole-free, meaning that there are no internal contact lines, using integration by parts will not produce boundary term at $s = 0$.
\end{rem}
By recalling anisotropic Gibbs-Thomson relation \cite{Sutton95}, the chemical potential is defined as 
\begin{equation}
  \mu = \Omega_{\,0}\frac{\delta W}{\delta \mS} = \Omega_{\,0}\left[ -(\gamma(\theta) + \gamma''(\theta))\,\kappa - \frac{\gamma(\theta)z_s + \gamma'(\theta)r_s}{r} + \varepsilon ^2\frac{ (r\,(\mu_\mS)_s)_s -2r\,\mu_\mS\,\kappa_m + \frac{1}{2}r\,\mu_\mS^3 }{r} \right], 
\end{equation}
where $\Omega_{\,0}$ denotes the atomic volume of the thin film material. According to Fick's laws of diffusion, we can obtain the normal velocity given by surface diffusion \cite{Mullins57, Cahn74}
\begin{equation}
\vec j = -\frac{D_s\,v}{k_B\,T_e}\bigtriangledown_s\mu, \qquad v_n = -\Omega_0(\bigtriangledown_s\cdot\vec j) = \frac{D_s\,v\,\Omega_0}{k_B\,T_e}\bigtriangledown_s^2\mu,  
\end{equation}
where $\vec j$ represents mass flux, $D_s$ denotes surface diffusivity, $k_B\,T_e$ is thermal energy, $v$ is the number of diffusing atoms per unit area, and $\bigtriangledown_s$ represents surface gradient. The two contact lines $\Gamma_i$ and $\Gamma_o$, move along the substrate, with the velocities $v_c^i$ and $v_c^o$ representing energy gradient flow, as defined by the time-dependent Ginzburg–Landau kinetic equations:
\begin{subequations}
    \begin{align}
       &v_c^o = -\eta\frac{\delta W}{\delta \Gamma_o} = -\eta\left[ \gamma(\theta_d^{\,o})\cos\theta_d^{\,o} - \gamma'(\theta_d^{\,o})\sin\theta_d^{\,o} - (\gamma_{VS} - \gamma_{FS}) - \varepsilon ^2(\mu_\mS)_s n_1) \right],  \\
       &v_c^i = -\eta\frac{\delta W}{\delta \Gamma_i} = \eta\left[\gamma(\theta_d^{\,i})\cos\theta_d^{\,i} - \gamma'(\theta_d^{\,i})\sin\theta_d^{\,i} - (\gamma_{VS} - \gamma_{FS}) - \varepsilon ^2(\mu_\mS)_s n_1) \right], 
    \end{align}
\end{subequations}
where $\eta\in(0, +\infty)$ represents the contact line mobility. 
Then we take the characteristic length scale and characteristic surface energy scale as $L$ and $\gamma_0$ respectively, and choose the time scale as $\frac{L^4}{B\,\gamma_0}$ with $B = \frac{D_s\,v\,\Omega_0^2}{k_B\,T_e}$. In addition, we select the contact line mobility as $\frac{B}{L^3}$. Since this model is axisymmetric, we further have $$\bigtriangledown_s^2\mu = \frac{1}{\sqrt{g}}\partial_i(\sqrt{g}g^{ij}\partial_j\mu) = \frac{1}{r}(r\mu_s)_s.$$ 
Then the Willmore regularized sharp interface model for SSD, considering anisotropic surface energy in three dimensions with axial symmetry, can be expressed in the following dimensionless form: 
\begin{subequations}\label{governingeq}
\begin{align}
 &r\,\vec X_t\cdot\vec n = (r\,\mu_s)_s, \quad 0<s<L(t), \quad t>0,  \\
 &\mu = -(\gamma(\theta) + \gamma''(\theta))\kappa - \frac{\gamma(\theta)\, z_s + \gamma'(\theta)\, r_s}{r} + \varepsilon ^2\frac{ (r\,(\mu_\mS)_s)_s -2r\,\mu_\mS\,\kappa_m + \frac{1}{2}r\,\mu_\mS^3}{r},  \\
 &\kappa = \vec X_{ss}\cdot\vec n, \quad \mu_\mS = \kappa - \frac{\vec n\cdot\vec e_1}{r}, \quad \kappa_m = \kappa(\mu_\mS - \kappa), \quad \vec n = -\vec X_s^\bot, 
\end{align}
\end{subequations}
where $\Gamma(t) := \vec X(s, t) = (r(s, t), z(s, t))$ is the generating curve of surface $\mS$, $L := L(t)$ denotes total arc length of open curve $\Gamma(t)$, $\mu(s, t)$ is chemical potential, $\kappa(s, t)$ is curvature of curve, $\mu_\mS(s, t)$ represents mean curvature, $\kappa_m$ is Gaussian curvature of surface $\mS$, $\vec n = (n_1, n_2) = (-z_s, r_s)$ is outward unit normal vector, and $0< \varepsilon\ll 1$ is a small regularization parametrization. The initial data is given as
\begin{equation}\label{initial}
\vec X(s, 0) := \vec X_0(s) = (r(s, 0), z(s, 0)), \qquad 0\le s\le L_0:=L(0). 
\end{equation}
This governing equation \eqref{governingeq}  satisfies the following boundary conditions: 

(i) contact line condition 
\begin{equation}\label{eqn:boundry1}
z(L, t) = 0, \quad \left\{\begin{matrix}
 z(0, t) = 0, & ~~~~\text{if}~~r(0, t)>0,\\
 z_s(0, t) = 0, & \text{otherwise}, 
\end{matrix}\right. \quad t\ge 0; 
\end{equation}

(ii) relaxed contact angle condition 
\begin{equation}\label{eqn:boundry2}
r_t(L, t) = -\eta f^{\varepsilon ^2}(\theta_d^{\,o}; \sigma), \quad \left\{\begin{matrix}
 r_t(0, t) = \eta f^{\varepsilon ^2}(\theta_d^{\,i}; \sigma), & ~~~~\text{if}~~r(0, t)>0,\\
 r(0, t) = 0, & \text{otherwise}, 
\end{matrix}\right. \quad t\ge 0; 
\end{equation}

(iii) zero-mass flux condition 
\begin{equation}\label{eqn:boundry3}
\mu_s(0, t) = 0, \quad \mu_s(L ,t) = 0, \quad t\ge 0; 
\end{equation}

(iv) zero-curvature condition 
\begin{equation}\label{eqn:boundry4}
\mu_\mS(0, t) = 0, \quad \mu_\mS(L, t) = 0, \quad t\ge 0, 
\end{equation}
where the function $f^{\varepsilon ^2}(\theta; \sigma)$ is defined by
\begin{equation}
f^{\varepsilon ^2}(\theta; \sigma) = \gamma(\theta)\cos\theta - \gamma'(\theta)\sin\theta - \sigma - \varepsilon ^2(\mu_\mS)_s\sin\theta, \quad \theta\in [-\pi, \pi], \quad \sigma = \frac{\gamma_{VS} - \gamma_{FS}}{\gamma_0}, 
\end{equation}
satisfying $\lim\limits_{\varepsilon ^2\to 0^+}f^{\varepsilon ^2}(\theta; \sigma)=f(\theta; \sigma)$.

We define $\text{vol}(\vec X(t))$ as the volume/mass of the thin film on the substrate, and $W(t)$ 
 as the total energy. For the system \eqref{governingeq} together with the boundary conditions (i)-(iv), the results can be obtained by calculating surface integrals
\begin{equation}
\text{vol}(\vec X(t)) = 2\pi\int_0^{L(t)}rzr_sds, \quad W(t) = 2\pi\int_0^{L(t)}r\gamma(\theta)ds + \varepsilon ^2\pi\int_0^{L(t)}r\mu_\mS^2ds - \sigma\pi(r_o^2 - r_i^2). 
\end{equation}
By directly differentiating with respect to the time variable t, we have 
\begin{equation}
\text{vol}(\vec X(t)) = \text{vol}(\vec X(0)), \qquad W(t_2)\le W(t_1)\le W(0), \qquad t_2\ge t_1\ge 0,
\end{equation}
i.e. volume conservation and energy dissipation laws. 

\section{A new geometric system}\label{sec3}

We define a surface energy matrix as 
\begin{equation}
\mat B_q(\theta) = \begin{pmatrix}
 \gamma(\theta) & -\gamma'(\theta) \\
 \gamma'(\theta) & \gamma(\theta) 
\end{pmatrix}\begin{pmatrix}
 \cos2\theta & \sin2\theta \\
 \sin2\theta & -\cos2\theta 
\end{pmatrix}^{1-q} + \mathscr S(\theta)\left[ \frac{1}{2}\mat I - \frac{1}{2}\begin{pmatrix}
 \cos2\theta & \sin2\theta \\
 \sin2\theta & -\cos2\theta 
\end{pmatrix} \right]\qquad\text{with}\qquad q = 0, 1, 
\end{equation}
where $\mat I$ is the $2 \times 2$ identity matrix and $\mathscr S(\theta)$ is a stability function. When $q = 0$, $\mat B_0(\theta)$ is a symmetrix 
matrix. If $\gamma(\theta) = \gamma(\pi + \theta)$, under some conditions on the stability function $\mathscr S(\theta)$, one can demonstrate that $\mat B_0(\theta)$ is a positive definite matrix  \cite{bao2023symmetrized}. When $q = 0$, $\mat B_0$ is an asymmetric matrix. From \cite{bao2023symmetrized, li2024structure}, we have 
\begin{equation}
-\left[ \gamma(\theta) + \gamma''(\theta) \right]r\kappa\vec n - \left[\gamma(\theta)z_s + \gamma'(\theta)r_s\right]\vec n = -\left[ r\mat B_q(\theta)\vec X_s \right]_s + \gamma(\theta)\vec e_1. 
\end{equation}
Additionally, by using $\kappa_m = \kappa(\mu_\mS - \kappa)$ and $\kappa = \vec X_{ss}\cdot\vec n$, and thanks to 
\begin{align}
\left[ (r(\mu_\mS)_s)_s - 2r\,\mu_\mS\,\kappa_m + \frac{1}{2}r\,\mu_\mS^3 \right]\vec n 
&=  (r(\mu_\mS)_s\vec n)_s - r(\mu_\mS)_s\,\kappa\,\vec X_s + r\mu_\mS(\mu_\mS)_s\vec X_s + \frac{1}{2}r\mu_\mS^3\vec n + \frac{1}{2}\mu_\mS^2\,\vec e_1   \nonumber \\
&~~~~ + \kappa\,\mu_\mS(\vec n\cdot\vec e_1)\,\vec n - \kappa\,\mu_\mS(\vec \tau\cdot\vec e_1)\,\vec\tau - \vec n\cdot\vec e_1\,(\mu_\mS)_s\vec X_s - \frac{1}{2}\mu_\mS^2\,\vec e_1 + \mu_\mS\,\kappa\,\vec e_1 \nonumber \\
&= \left[r(\mu_\mS)_s\vec n\right]_s + \frac{1}{2}\left[ r\,\mu_\mS^2\vec X_s \right]_s + \left[ \vec n\cdot\vec e_1\,\mu_\mS\,\vec X_s \right]_s - \frac{1}{2}\mu_\mS^2\vec e_1 + \mu_\mS\,\kappa\,\vec e_1 \nonumber \\
&= \left[ r(\mu_\mS)_s\,\vec n + \frac{1}{2}r\,\mu_\mS^2\vec X_s + \vec n\cdot\vec e_1\,\mu_\mS\vec X_s \right]_s - \frac{1}{2}\mu_\mS^2\,\vec e_1 + \mu_\mS\,\kappa\,\vec e_1, 
\end{align}
we can obtain 
\begin{align} \label{new1}
r\mu\vec n 
&= -\left[ \gamma(\theta) + \gamma''(\theta) \right]r\,\kappa\,\vec n - \left[ \gamma(\theta)z_s + \gamma'(\theta)r_s \right]\vec n + \varepsilon ^2\left[ (r(\mu_\mS)_s)_s - 2r\,\mu_\mS\,\kappa_m + \frac{1}{2}r\,\mu_\mS^3 \right]\vec n \nonumber \\
&= -\left[ r\mat B_q(\theta)\vec X_s \right]_s + \varepsilon ^2\left[ r(\mu_\mS)_s\,\vec n + \frac{1}{2}r\,\mu_\mS^2\vec X_s + \vec n\cdot\vec e_1\,\mu_\mS\vec X_s \right]_s + \gamma(\theta)\vec e_1 - \frac{1}{2}\varepsilon ^2\,\mu_\mS^2\vec e_1 + \varepsilon ^2\,\mu_\mS\,\kappa\,\vec e_1. 
\end{align}
Noting the mean curvature $\mu_\mS = \kappa - \frac{\vec n\cdot\vec e_1}{r}$, by differentiating with respect to variable $t$, we have 
\begin{align} \label{piankappa1}
r_t\,\mu_\mS + r\,(\mu_\mS)_t = r_t\kappa + r\kappa_t - (\vec n\cdot\vec e_1)_t.
\end{align}
From \cite{bao2024energy}, we can obtain the relations
\begin{align}
&\kappa_t = \left[ \vec n\cdot\left( \vec X_t \right)_s \right]_s - \left[ \vec X_s\cdot\left( \vec X_t \right)_s \right]\kappa,  \label{piankappa2} \\
&\left( \vec X_s \right)_t = \left( \vec X_t \right)_s - \left[ \vec X_s\cdot\left( \vec X_t \right)_s \right]\vec X_s = \left[ \vec n\cdot\left( \vec X_t \right)_s \right]\vec n. \label{piankappa3}
\end{align}
Therefore, we have 
\begin{align}\label{piankappa4}
\vec n_t = \left[ -\left(\vec X_s\right)^\bot \right]_t = -\left[ \left(\vec X^\bot\right)_s \right]_t = -\left[ \left( \vec X^\bot \right)_t \right]_s + \left[ \left( \vec X^\bot \right)_s\cdot\left[ \left( \vec X^\bot \right)_t \right]_s \right]\left( \vec X^\bot\right)_s = -\left[ \vec X_s\cdot\left[ \left( \vec X^\bot \right)_t \right]_s \right]\vec X_s = -\left[ \vec n\cdot\left( \vec X_t \right)_s \right]\vec X_s. 
\end{align}
Taking \eqref{piankappa2}, \eqref{piankappa3} and \eqref{piankappa4} into \eqref{piankappa1}, we have
\begin{align} \label{new2}
r_t\,\mu_\mS + r\,(\mu_\mS)_t = \left[ r\left( \vec n\cdot\left( \vec X_t \right)_s \right) \right]_s - \vec X_s\cdot\left( \vec X_t \right)_s\left[ r\,\mu_\mS + \vec n\cdot\vec e_1
 \right] + r_t\,\kappa. 
\end{align}
Then by combining \eqref{new1} and \eqref{new2}, we can obtain a new geometric PDE as follows: 
\begin{subequations}
\label{eqn:newPDE}
\begin{align}
&r\,\vec X_t\cdot\vec n = (r\,\mu)_s, \\
&r\,\mu\,\vec n = \left[ -r\mat B_q(\theta)\vec X_s  + \varepsilon ^2 \left( r(\mu_\mS)_s\,\vec n + \frac{1}{2}r\,\mu_\mS^2\vec X_s + \vec n\cdot\vec e_1\,\mu_\mS\vec X_s \right) \right]_s + \gamma(\theta)\vec e_1 - \frac{1}{2}\varepsilon ^2\,\mu_\mS^2\vec e_1 + \varepsilon ^2\,\mu_\mS\,\kappa\,\vec e_1, \\
&r_t\,\mu_\mS + r\,(\mu_\mS)_t = \left[ r\left( \vec n\cdot\left( \vec X_t \right)_s \right) \right]_s - \vec X_s\cdot\left( \vec X_t \right)_s\left[ r\,\mu_\mS + \vec n\cdot\vec e_1
 \right] + r_t\,\kappa, \\
&r\,\mu_\mS = r\,\kappa - \vec n\cdot\vec e_1, 
\end{align}  
\end{subequations}
with the boundary conditions (i)-(iv) given in \eqref{eqn:boundry1}-\eqref{eqn:boundry4}.
From \cite{barrett2021stable}, there hold 
\begin{align}
\vec n(\rho, t)\cdot\vec e_1 = 0, \qquad \vec X_\rho(\rho, t)\cdot\vec e_2 = 0 \quad \text{for} \quad (\rho, t)\in\{0, 1\}\times[0, T], 
\end{align}
and  
\begin{align}
\lim_{\rho \to \rho_0}\frac{\vec n(\rho, t)\cdot\vec e_1}{r(\rho, t)} = \lim_{\rho \to \rho_0}\frac{\vec n_\rho(\rho, t)\cdot\vec e_1}{r_\rho(\rho, t)} = \vec n_s(\rho_0, t)\cdot\vec\tau(\rho_0, t) = -\kappa(\rho_0, t)\quad \text{for}\quad (\rho_0, t)\in \{ 0, 1 \}\times [0, T]. 
\end{align}
Except the initial condition \eqref{initial}, we also need know the initial value of $\mu_\mS(\rho, \cdot)$, which can be computed by
\begin{align}\label{eqn:initial}
\mu_\mS(\rho, 0)= \left\{\begin{matrix}
 2\kappa(\rho, 0) & \quad \text{for}~\rho\in \{ 0, 1 \},  \\
 \kappa(\rho, 0) - \frac{\vec n\cdot\vec e_1}{r(\rho, 0)} & \quad \text{for}~\rho\in (0,1). 
\end{matrix}\right.
\end{align}

\section{Variational formulation and its properties}\label{sec4}

In this section, we build the variational formulation of the new geometric PDE \eqref{eqn:newPDE}, and then demonstrate the volume conservation and energy decay properties of the variational formulation. In order to introduce the variational formulation, we define a functional space on the domain $\mathbb I$ as 
\begin{align}
L^2(\mathbb I) := \left \{ u : \mathbb I\to\mathbb R \bigg| \int_{\Gamma(t)} \left| u(s) \right|^2 ds = \int_{\mathbb I} \left| u(s(\rho, t)) \right|^2 s_\rho\, d\rho < +\infty \right \}, 
\end{align}
equipped with the $L^2$-inner product 
\begin{align}
(u, v) := \int_{\Gamma(t)}u(s)v(s) ds = \int_{\mathbb I}u(s(\rho, t))v(s(\rho, t))\,s_{\rho}\, d\rho, \qquad \forall u, v \in L^2(\mathbb I). 
\end{align}
This $L^2$-inner product can also be directly extended to $[L^2(\mathbb I)]^2$. Moreover, we define the Sobolve spaces: 
\begin{align}
&\mat H^1(\mathbb I) := \left\{ u : \mathbb I \to \mathbb R, u\in L^2(\mathbb I) ~~\text{and}~~ u_\rho\in L^2(\mathbb I) \right\}, \\ 
&\mat H^1_0(\mathbb I) := \left\{ u\in \mat H^1(\mathbb I) : u(0) = u(1) = 0\right\}, 
\end{align}
and two another functional spaces: 
\begin{align}
&\mat H^{(r)}_{a, b}(\mathbb I) = \left\{ u\in\mat H^1(\mathbb I): u(0) = a; ~~ u(1) = b \right\}, \\
&\mat H^{(z)}_{a, b}(\mathbb I) = \left\{ u\in\mat H^1: u(1) = 0; ~~\text{if}~~ a>0, ~~ u(0) = 0 \right\}, 
\end{align}
where $a$ and $b$ are the radii of the inner and outer contact lines. If $a = b = 0$, we can directly obtain $\mat H^{(r)}_{a, b}(\mathbb I) = \mat H^{(r)}_{0, 0}(\mathbb I) = \mat H^1_0(\mathbb I)$. 

Then, by multiplying test functions $\phi \in \mat H^1(\mathbb I)$, $\vec\omega\in \mat H^1(\mathbb I)\times \mat H_0^1(\mathbb I)$, $\psi \in \mat H_0^1(\mathbb I)$ and $\chi \in L^2(\mathbb I)$ in \eqref{eqn:newPDE}, then integrating by parts, combining boundary conditions \eqref{eqn:boundry1}-\eqref{eqn:boundry4}, we can derive the variational formulation of the geometric PDE \eqref{eqn:newPDE}: given the initial curve open curve $\Gamma(0) := \vec X(\rho, 0)$, to find the solution $(\vec X(\cdot, t), \mu(\cdot, t), \mu_\mS(\cdot, t), \kappa(\cdot, t) )\in(\mat H_{a, b}^{(r)}(\mathbb I) \times \mat H_{a, b}^{(z)}(\mathbb I), \mat H^1(\mathbb I), \mat H_0^1(\mathbb I), L(\mathbb I))$, such that 
\begin{subequations}\label{bianfen}
\begin{align}
&\left( r\,\vec X_t\cdot\vec n, \phi \left| \vec X_\rho \right| \right) + \left( r\,\mu_\rho, \phi_\rho \left| \vec X_\rho \right|^{-1} \right) = 0, \qquad \forall\phi\in\mat H^1(\mathbb I), \label{bianfen1}\\
&\left( r\,\mu\,\vec n, \vec\omega \left| \vec X_\rho \right| \right) - \left( r\,\mat B_q(\theta)\,\vec X_\rho, \vec\omega_\rho \left| \vec X_\rho \right|^{-1} \right) + \varepsilon ^2\left( r\,(\mu_\mS)_\rho\,\vec n + \frac{1}{2}r\,\mu_\mS^2\,\vec X_\rho + \vec n\cdot\vec e_1\,\mu_\mS\,\vec X_\rho, \vec\omega_\rho\left| \vec X_\rho \right|^{-1} \right) \nonumber \\
&~~~~ - \left( \gamma(\theta)\vec e_1, \vec\omega\left| \vec X_\rho \right| \right) + \frac{\varepsilon ^2}{2}\left( \mu_\mS^2\vec e_1, \vec\omega\left| \vec X_\rho \right| \right) - \varepsilon ^2\left( \mu_\mS\,\kappa\,\vec e_1, \vec\omega\left| \vec X_\rho \right| \right) \nonumber \\
&~~~~ + \frac{1}{\eta}\left[ r_i\,r_t(0, t)\omega_1(0) + r_o\,r_t(1, t)\omega_1(1) \right] - \sigma\left[ r_o\omega_1(1) - r_i\omega_1(0) \right] = 0, \qquad \forall\vec\omega\in\mat H^1(\mathbb I)\times\mat H_0^1(\mathbb I), \label{bianfen2} \\ 
&\left( r_t\,\mu_\mS, \psi\left| \vec X_\rho \right| \right) + \left( r\,(\mu_\mS)_t, \psi\left| \vec X_\rho \right| \right) + \left( r\,\left( \vec n\cdot\left( \vec X_t \right)_\rho \right), \psi_\rho\left| \vec X_\rho \right|^{-1} \right) + \left( r\,\left( \vec X_\rho\cdot\left( \vec X_t \right)_\rho\right)\mu_\mS, \psi\left| \vec X_\rho \right|^{-1}\right) \nonumber \\
&~~~~ + \left( \vec n\cdot\vec e_1\,\left( \vec X_\rho\cdot\left( \vec X_t \right)_\rho\right), \psi\left| \vec X_\rho \right|^{-1}\right) - \left( r_t\,\kappa, \psi\left| \vec X_\rho \right| \right) = 0, \qquad \forall \psi\in\mat H_0^1(\mathbb I), \label{bianfen3} \\
&\left( r\,\mu_\mS, \chi\left| \vec X_\rho \right| \right) - \left( r\kappa, \chi\left| \vec X_\rho \right| \right) + \left( \vec n\cdot\vec e_1, \chi\left| \vec X_\rho \right| \right) = 0, \qquad \forall \chi\in L^2(\mathbb I), \label{bianfen4}
\end{align}
\end{subequations}
with the initial conditions \eqref{initial} and \eqref{eqn:initial}.

For the variational formulation \eqref{bianfen}, we can obtain its volume conservation and energy decay properties. 
\begin{thm}
(Volume conservation and energy decay). Let $(\vec X(\cdot, t), \mu(\cdot, t), \mu_\mS(\cdot, t), \kappa(\cdot, t) )$ be the solution of variational formulation \eqref{bianfen} with initial curve $\Gamma_0$. Then we have
\begin{align}\label{VE}
\text{vol}(\vec X(t)) = \text{vol}(\vec X(0)), \qquad W(t_2)\le W(t_1)\le W(0), \qquad t_2\ge t_1\ge 0. 
\end{align}
\begin{proof}
By directly differentiating $\text{vol}(\vec X(t))$ with respect to the time variable $t$, we have 
\begin{align}
\frac{\text{d}}{\text{d}t}\text{vol}(\vec X(t)) &= 2\pi\frac{\text{d}}{\text{d}t}\int_{\mathbb I} r\,r_\rho \, zd\rho \nonumber \\
&= 2\pi\int_{\mathbb I}( r_t\,r_\rho\, z + r\, r_\rho\, z_t + r\, (r_t)_\rho\, z )d\rho \nonumber \\
&= 2\pi\int_{\mathbb I}( r\,r_\rho\,z_t - r\,z_\rho\,r_t )d\rho \nonumber \\
&= 2\pi\int_{\mathbb I}r\,\vec X_t\cdot\vec n \left| \vec X_\rho 
\right| d\rho \nonumber \\
&= 2\pi \left( r\,\vec X_t\cdot\vec n, \left| \vec X_\rho 
\right| \right), \qquad t\ge 0. 
\end{align}
Selecting the test function $\phi = 1$ in \eqref{bianfen1}, we have 
\begin{align}
\left( r\,\vec X_t\cdot\vec n, \left| \vec X_\rho \right| \right) = \left( r\,\mu_\rho, 0\,\left| \vec X_\rho \right|^{-1} \right) = 0, \qquad t\ge 0, 
\end{align}
which implies volume conservation property in \eqref{VE}.

Furthermore, differentiating $W(t)$ with respect to $t$ and integrating by parts, it follows that
\begin{align}
\frac{1}{2\pi}\frac{\text{d}}{\text{d}t}W(t) &= \frac{\text{d}}{\text{d}t}\left(\int_0^{L(t)}r\gamma(\theta)ds + \varepsilon ^2\pi\int_0^{L(t)}r\mu_\mS^2ds - \frac{\sigma}{2}(r_o^2 - r_i^2)\right) \nonumber \\
&= \frac{\text{d}}{\text{d}t}\left(\int_{\mathbb I}r\,\gamma(\theta)\, s_\rho d\rho + \varepsilon ^2\pi\int_{\mathbb I}r\,\mu_\mS^2\, s_\rho d\rho - \frac{\sigma}{2}(r_o^2 - r_i^2)\right) \nonumber \\
&= \int_{\mathbb I}\left( r_t\,\gamma(\theta)\, s_\rho + r\,\gamma'(\theta)\, \theta_t\, s_\rho + r\,\gamma(\theta)\, (s_\rho)_t \right)d\rho \nonumber \\
&~~~~ + \varepsilon ^2\int_{\mathbb I}\left( r_t\,\mu_\mS^2\,s_\rho + 2r\,\mu_\mS\,(\mu_\mS)_t\,s_\rho + r\,\mu_\mS^2\,(s_\rho)_t \right)d\rho -\sigma\left[ r_o\,(r_o)_t - r_i\,(r_i)_t \right] \nonumber \\
&= \int_{\Gamma(t)}r\,\vec X_t\cdot\vec n\left[ -(\gamma(\theta) + \gamma''(\theta))\kappa - \frac{\gamma(\theta)\, z_s + \gamma'(\theta)\, r_s}{r} + \varepsilon ^2\frac{ (r\,(\mu_\mS)_s)_s -2r\,\mu_\mS\,\kappa_m + \frac{1}{2}r\,\mu_\mS^3}{r} \right]ds \nonumber \\
&~~~~ - \frac{1}{\eta}\left[ r_o(r_o)_t^2 + r_i(r_i)_t^2 \right] \nonumber \\
&= \int_{\Gamma(t)}r\,\vec X_t\cdot\vec n\,\mu ds - \frac{1}{\eta}\left[ r_o(r_o)_t^2 + r_i(r_i)_t^2 \right] \nonumber \\
&= \left( r\,\vec X_t\cdot\vec n, \mu\left| \vec X_\rho \right| \right) - \frac{1}{\eta}\left[ r_o(r_o)_t^2 + r_i(r_i)_t^2 \right]. \nonumber
\end{align}
Taking the test function $\phi = \mu$ in \eqref{bianfen1}, we can obtain
\begin{align}
\frac{1}{2\pi}\frac{\text{d}}{\text{d}t}W(t) = \left( r\,\vec X_t\cdot\vec n, \mu\left| \vec X_\rho \right| \right) - \frac{1}{\eta}\left[ r_o(r_o)_t^2 + r_i(r_i)_t^2 \right] = -\left( r\mu_\rho, \mu_\rho\left| \vec X_\rho \right|^{-1} \right) - \frac{1}{\eta}\left[ r_o(r_o)_t^2 + r_i(r_i)_t^2 \right] \le 0, \quad t\ge 0, \nonumber 
\end{align}
which implies energy decay property in \eqref{VE}. Therefore, we have completed this proof. 
\end{proof}
\end{thm}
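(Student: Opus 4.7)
The plan is to handle the two assertions separately, leveraging the first-variation calculation already carried out in Section~\ref{sec2}.

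\textbf{Volume conservation.} I would differentiate $\text{vol}(\vec X(t)) = 2\pi\int_{\mathbb I} r\, z\, r_\rho\, d\rho$ in time. The product rule yields three terms; integrating the contribution containing $(r_t)_\rho$ by parts in $\rho$ kills the boundary since $z(0,t) = z(L,t) = 0$ by the contact line condition~\eqref{eqn:boundry1}. The remaining bulk integrand reorganizes, via $\vec n = |\vec X_\rho|^{-1}(-z_\rho,r_\rho)$, into $2\pi (r\vec X_t\cdot\vec n,|\vec X_\rho|)$. Testing \eqref{bianfen1} with the constant $\phi\equiv 1$ immediately forces this to vanish, so $\text{vol}(\vec X(t)) = \text{vol}(\vec X(0))$.

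\textbf{Energy decay.} The driving observation is that the chemical potential formula in~\eqref{governingeq} is precisely $\delta W/\delta \mS$, as derived in Section~\ref{sec2}. Concretely, I would differentiate $W(t)$ directly in time and process the three contributions (anisotropic surface, Willmore, and substrate). Using the kinematic identities~\eqref{piankappa2}--\eqref{piankappa4} for $\kappa_t$, $(\vec X_s)_t$ and $\vec n_t$, the identity~\eqref{new2} for $r_t\mu_\mS + r(\mu_\mS)_t$, together with repeated integration by parts in $\rho$, the bulk part should collapse to $2\pi(r\vec X_t\cdot\vec n,\mu|\vec X_\rho|)$ with $\mu$ as in \eqref{governingeq}. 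The boundary contributions generated along the way combine with $-\sigma\tfrac{d}{dt}\pi(r_o^2 - r_i^2)$ via the relaxed contact angle condition~\eqref{eqn:boundry2} to produce the pure kinetic contact line dissipation $-2\pi\eta^{-1}[r_o(r_o)_t^2 + r_i(r_i)_t^2]$. Testing \eqref{bianfen1} with $\phi = \mu$ then rewrites $(r\vec X_t\cdot\vec n,\mu|\vec X_\rho|) = -(r\mu_\rho,\mu_\rho|\vec X_\rho|^{-1})$, which is nonpositive because $r\ge 0$. Integrating in time between $t_1$ and $t_2$ yields the monotonicity $W(t_2)\le W(t_1)\le W(0)$.

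\textbf{Main obstacle.} The genuinely delicate step is the Willmore bookkeeping. Because $\mu$ contains the fourth-order operator $(r(\mu_\mS)_s)_s$ together with the cubic $\tfrac12 r\mu_\mS^3$ and the Gauss-type term $2r\mu_\mS\kappa_m$, each integration by parts produces endpoint contributions that only cancel when the full suite of boundary conditions~\eqref{eqn:boundry1}--\eqref{eqn:boundry4} is deployed in concert with the axisymmetric identities $\vec n(\rho,t)\cdot\vec e_1 = 0$ at $\rho\in\{0,1\}$ recorded at the end of Section~\ref{sec3}. Once those cancellations are organized, the final inequality is structural: both the bulk dissipation $r\mu_\rho^2|\vec X_\rho|^{-1}$ and the contact line contribution $\eta^{-1}[r_o(r_o)_t^2 + r_i(r_i)_t^2]$ are manifestly nonnegative, so no further estimates are needed beyond the test function choice $\phi = \mu$ in \eqref{bianfen1}.
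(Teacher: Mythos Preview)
Your proposal is correct and follows essentially the same route as the paper: differentiate $\mathrm{vol}(\vec X(t))$ and $W(t)$ in time, recognise the first-variation expression for $\mu$ from Section~\ref{sec2} so that the bulk collapses to $(r\vec X_t\cdot\vec n,\mu|\vec X_\rho|)$ with the boundary remainder becoming the contact-line dissipation via \eqref{eqn:boundry2}, and then test \eqref{bianfen1} with $\phi=1$ and $\phi=\mu$ respectively. One small caveat: the identity $\vec n\cdot\vec e_1=0$ at $\rho\in\{0,1\}$ you invoke from the end of Section~\ref{sec3} is specific to the axis-touching case $r=0$; in the generic two-contact-line setting the endpoint cancellations are driven entirely by \eqref{eqn:boundry1}--\eqref{eqn:boundry4}, so you can drop that reference without loss.
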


\section{Parametric finite element approximation}\label{sec5}
In this section, we construct an energy-stable PFEM for the variational formulation \eqref{bianfen}. 
The time interval is divided as $[0, T] = \bigcup_{m=0}^{M-1} [t_m, t_{m+1}]$, with time step sizes given by $\Delta t_m = t_{m+1} - t_m$. The spatial domain $\mathbb{I} = \bigcup _{j=1}^{J}\mathbb I_j =  \bigcup_{j=1}^{J} [q_{j-1}, q_j]$ is uniformly partitioned into $J$ equal parts, with spatial step size $h = J^{-1}$. 
Then, we define the following finite element spaces:
\begin{align*}
&\mat V^h := \left\{ u\in C(\mathbb I): u\big|_{\mathbb I_j}\in\mathbb P_1, \quad \forall j=1, 2, \dots, J  \right\}\subseteq \mat H^1(\mathbb I), \\
&\mat V_0^h = \mat V^h\cap \mat H_0^1, \quad \mat V_{a, b}^{h, (r)} = \mat V^h\cap \mat H_{a, b}^{(r)}, \quad \mat V_{a, b}^{h, (z)} = \mat V^h\cap \mat H_{a, b}^{(z)}, 
\end{align*}
where $\mathbb P_1$ represents all polynomials with degrees at most $1$, $a$ and $b$ are two constants. We use $\Gamma^m := \vec X^m \in \mat V_{a, b}^{h, (r)}\times\mat V_{a, b}^{h, (z)}$ to approximate the evolution curve $\Gamma (t_m) := \vec X(\cdot, t_m)\in\mat H_{a, b}^{(r)}\times\mat H_{a, b}^{(z)}$. The approximation curve $\Gamma^m$ is comprised of line segments 
\begin{align}
\vec h_j^m := \vec X^m(\rho_j) - \vec X^m(\rho_{j-1}), \qquad j = 1, 2, \dots, J,  \nonumber 
\end{align}
and $| \vec h_j^m|$ representing the length of $\vec h_j^m$. Then, we can calculate the unit tangent vector $\vec\tau^m$ and the unit outward normal vector $\vec n^m$ of the approximate curve on interval $\mathbb I_j$ as 
\begin{align}
\vec\tau^m|_{\mathbb I_j} = \frac{\vec h_j^m}{\left| \vec h_j^m \right|} := \vec\tau_j^m, \qquad \vec n^m|_{\mathbb I_j} = -\frac{\left(\vec h_j^m\right)^\bot}{\left| \vec h_j^m \right|} := \vec n_j^m. \nonumber
\end{align}
Subsequently, we define the mass-lumped inner product on $\Gamma^m$ as follows 
\begin{align}
\left( \vec u, \vec v \right)_{\Gamma^m}^h := \frac{1}{2}\sum_{j=1}^{J}\left| \vec h_j^m \right|\left[ (\vec u\cdot\vec v)(\rho_j^-) + (\vec u\cdot\vec v)(\rho_{j-1}^+) \right], 
\end{align}
where $\vec v(\rho_j^{\pm}) = \lim\limits_{\rho \to \rho_j^{\pm}}\vec v(\rho) $ for $0\le j\le J$. 

By using the backward Euler method in terms of time, we establish an energy-stable parameter finite element approximation for the variational formulation \eqref{bianfen}: given initial data $(\vec X^0, \mu^0, \mu_\mS^0, \kappa^0) \in (\mat V_{a, b}^{h, (r)}\times\mat V_{a, b}^{h, (z)}, \mat V^h, \mat V_0^h, \mat V^h)$, find the solution $ (\vec X^{m+1}, \mu^{m+1}, \mu_\mS^{m+1}) \in (\mat V_{a, b}^{h, (r)}\times\mat V_{a, b}^{h, (z)}, \mat V^h, \mat V_0^h) $, such that
\begin{subequations} \label{numerapp}
\begin{align}
&\left( \frac{\vec X^{m+1} - \vec X^m}{\ttau}, \phi^h\vec f^{m + \frac{1}{2}} \right) + \left( r^m\,\mu_\rho^{m+1}, \phi_\rho^h\left| \vec X_\rho^m \right|^{-1} \right) = 0 \qquad \forall \phi^h\in\mat V^h, \label{numerapp1} \\
&\left( \mu^{m+1}\vec f^{m + \frac{1}{2}}, \vec\omega^h \right) - \left( r^m\mat B_q(\theta^m)\vec X_\rho^{m+1}, \vec\omega_\rho^h\left| \vec X_\rho^m \right|^{-1} \right) \nonumber \\
&~~~~ + \varepsilon ^2\left( r^m\,(\mu_\mS^{m+1})_\rho\,\vec n^m + \frac{1}{2}r^{m+1}\,(\mu_\mS^{m+1})^2\,\vec X_\rho^{m+1} + \vec n^m\cdot\vec e_1\,\mu_\mS^{m+1}\vec X_\rho^m, \vec\omega_\rho^h\left| \vec X_\rho^m \right|^{-1} \right) \nonumber \\
&~~~~ - \left( \gamma(\theta^{m+1})\vec e_1, \vec\omega^h\left| \vec X_\rho^{m+1} \right|  \right) + \frac{\varepsilon ^2}{2}\left( (\mu_\mS^{m+1})^2\vec e_1, \vec\omega^h\left| \vec X_\rho^m \right| \right) - \varepsilon ^2\left( \mu_\mS^{m+1}\,\kappa^m\,\vec e_1, \vec\omega^h\left| \vec X_\rho^m \right|\right) \nonumber \\
&~~~~ + \frac{1}{2\eta\ttau}\left[ (r_i^{m+1} + r_i^m)( r_i^{m+1} - r_i^m )\omega_1^h(0) + (r_o^{m+1} + r_o^m)( r_o^{m+1} - r_o^m )\omega_1^h(1) \right] \nonumber \\
&~~~~ - \frac{\sigma}{2} \left[ (r_o^{m+1} + r_o^m)\omega_1^h(1) - (r_i^{m+1} + r_i^m)\omega_1^h(0) \right] = 0 \qquad \forall \vec\omega^h = (\omega_1^h, \omega_2^h)\in \mat V^h\times\mat V_0^h, \label{numerapp2} \\
&\left( \frac{r^{m+1} - r^m}{\ttau}\mu_\mS^{m+1}, \psi^h \left| \vec X_\rho^m \right| \right) + \left( r^m\frac{\mu_\mS^{m+1} - \mu_\mS^m}{\ttau}, \psi^h \left| \vec X_\rho^m \right| \right) + \left( r^m\,\left( \vec n^m\cdot\frac{\vec X_\rho^{m+1} - \vec X_\rho^m}{\ttau} \right), \psi_\rho^h \left| \vec X_\rho^m \right|^{-1} \right) \nonumber \\
&~~~~ + \left( r^{m+1}\,\left( \vec X_\rho^{m+1}\cdot\frac{\vec X_\rho^{m+1} - \vec X_\rho^m}{\ttau}\right)\mu_\mS^{m+1}, \psi^h\left| \vec X_\rho^m \right|^{-1}  \right) + \left( \vec n^m\cdot e_1\,\left( \vec X_\rho^m\cdot\frac{ \vec X_\rho^{m+1} - \vec X_\rho^m}{\ttau} \right), \psi^h\left| \vec X_\rho^m \right|^{-1} \right) \nonumber \\
&~~~~ - \left( \frac{r^{m+1} - r^m}{\ttau}\,\kappa^m, \psi^h\left| \vec X_\rho^m \right| \right) = 0 \qquad \forall \psi^h\in\mat V_0^h, \label{numerapp3}
\end{align}
\end{subequations}
where $\vec f^{m + \frac{1}{2}}\in[\mat L^\infty(\mathbb I)]^2$ denotes the approximation of $\vec f = r\left| \vec X_\rho \right|\vec n$, given by
\begin{align}
\vec f^{m+\frac{1}{2}} = -\frac{1}{6}\left[ 2r^m\,\vec X_\rho^m +2r^{m+1}\,\vec X_\rho^{m+1} + r^m\,\vec X_\rho^{m+1} + r^{m+1}\,\vec X_\rho^m \right]^\bot. 
\end{align}

For \(\kappa^{m+1} \in \mathbb{V}^h\), it can be determined by solving the following equation:  
\begin{align}
\left( r^{m+1} \, \mu_\mathcal{S}^{m+1}, \, \chi^h \left| \vec{X}_\rho^{m+1} \right| \right) 
- \left( r^{m+1} \, \kappa^{m+1}, \, \chi^h \left| \vec{X}_\rho^{m+1} \right| \right) 
+ \left( \vec{n}^{m+1} \cdot \vec{e}_1, \, \chi^h \left| \vec{X}_\rho^{m+1} \right| \right) = 0 
\qquad \forall \, \chi^h \in \mathbb{V}^h.
\end{align}
Alternatively, \(\kappa^{m+1} \in \mathbb{V}^h\) can also be computed by solving:
\begin{align}
\left( \kappa^{m+1} \, \vec{n}^{m+1}, \, \vec{g}^h \left| \vec{X}_\rho^{m+1} \right| \right) 
- \left( \vec{X}_\rho^{m+1}, \, \vec{g}_\rho^h \left| \vec{X}_\rho^{m+1} \right|^{-1} \right) = 0 
\qquad \forall \, \vec{g}^h = (g_1^h, g_2^h) \in \mathbb{V}_0^h \times \mathbb{V}_0^h.
\end{align}

For the numerical approximation \eqref{numerapp}, we have the following energy stability result. To this end, we first introduce a lemma that is crucial to the proof of the energy stability. 
\begin{lem}\label{lem}
For the surface energy matrix $\mat B_q(\theta)$ with $q=0, 1$, the following cases are included: 
\begin{itemize}
    \item[\textbf{Case I.}] For $q=0$, if $\gamma(\theta)=\gamma(\pi+\theta)$ and 
    \begin{equation}
\mathscr S(\theta)\ge \mathscr S_0(\theta):=inf\left \{ \mathscr S(\theta)|\bigg[\gamma(\theta)\mat{B}_q(\theta)\,(\cos\hat{\theta}, \sin\hat{\theta})^\top\bigg]\cdot(\cos\hat{\theta}, \sin\hat{\theta})^\top\geq \gamma(\hat{\theta})^2, \forall\hat{\theta}\in[-\pi, \pi] \right \}, \quad\theta\in[-\pi, \pi], 
\end{equation}
then one can demonstrate that $\mat B_q(\theta)$ is a symmetric positive definite matrix and 
    \begin{align}
\mat B_q(\theta)\vec u\cdot(\vec u - \vec v) \ge \frac{1}{2}\mat B_q(\theta)\vec u\cdot \vec u - \frac{1}{2}\mat B_q(\theta)\vec v\cdot \vec v,  \qquad \forall \vec u,~\vec v\in \mathbb R^2,  
\end{align}
where $\theta = \arctan\frac{v_2}{v_1}$.
    \item[\textbf{Case II.}] For $q=1$, if $3\gamma(\theta)\ge \gamma(\pi + \theta)$ and 
   \begin{equation}
   \mathscr S(\theta)\ge \mathscr S_0(\theta):=inf\left \{ \varepsilon ^2\ge 0:P_\varepsilon ^2(\theta, \hat{\theta})-Q(\theta, \hat{\theta})\ge 0, \forall\hat{\theta}\in[-\pi, \pi] \right \}, \quad\theta\in[-\pi, \pi], 
   \end{equation}
   with $P_{\varepsilon ^2}(\theta, \hat{\theta})$ and $ Q(\theta, \hat{\theta})$ defined by  
   \begin{subequations}
   \begin{align}
   &P_{\varepsilon ^2}(\theta, \hat{\theta}):=2\sqrt{(\gamma(\theta)+\varepsilon ^2(-\sin\hat{\theta}\cos\theta+\cos\hat{\theta}\sin\theta)^2)\gamma(\theta)}, \quad\forall\theta, \hat{\theta}\in[-\pi, \pi], \quad\varepsilon ^2\ge 0, \\
   &Q(\theta, \hat{\theta}):=\gamma(\hat{\theta})+\gamma(\theta)(\sin\theta\sin\hat{\theta}+\cos\theta\cos\hat{\theta})-\gamma'(\theta)(-\sin\hat{\theta}\cos\theta+\cos\hat{\theta}\sin\theta), \quad\forall\theta, \hat{\theta}\in[-\pi, \pi]. 
  \end{align}
  \end{subequations}
   Then there holds 
   \begin{equation}\label{inequality}
   \frac{1}{\left | \vec v \right |}\left(\mat{B}_q(\theta)\vec w\right)\cdot(\vec w-\vec v)\ge\left | \vec w \right |\gamma(\hat{\theta})-\left | \vec v \right |\gamma(\theta), 
   \end{equation}
   where $(-\sin \theta, \cos \theta) = \frac{\vec v}{\left | \vec v \right |}$, $(-\sin \hat{\theta}, \cos \hat{\theta}) = \frac{\vec w}{\left | \vec w \right |}$.
\end{itemize}
\begin{itemize}
\item[\textbf{Case III.}] For $q = 0$, if $3\gamma(\theta)\ge \gamma(\pi + \theta)$ and 
   \begin{equation}
   \mathscr S(\theta)\ge \mathscr S_0(\theta):=inf\left \{ \varepsilon ^2\ge 0:P_\varepsilon ^2(\theta, \hat{\theta})-Q(\theta, \hat{\theta})\ge 0, \forall\hat{\theta}\in[-\pi, \pi] \right \}, \quad\theta\in[-\pi, \pi], 
   \end{equation}
   with $P_{\varepsilon ^2}(\theta, \hat{\theta})$ and $ Q(\theta, \hat{\theta})$ defined by  
   \begin{subequations}
   \begin{align}
   &P_{\varepsilon ^2}(\theta, \hat{\theta}):=2\sqrt{(-\gamma(\theta)+\varepsilon ^2(-\sin\hat{\theta}\cos\theta+\cos\hat{\theta}\sin\theta)^2 + f(\theta, \hat{\theta}))\gamma(\theta)}, \quad\forall\theta, \hat{\theta}\in[-\pi, \pi], \quad\varepsilon ^2\ge 0, \\
   &Q(\theta, \hat{\theta}):=\gamma(\hat{\theta})+\gamma(\theta)(\sin\theta\sin\hat{\theta}+\cos\theta\cos\hat{\theta}) -\gamma'(\theta)(-\sin\hat{\theta}\cos\theta+\cos\hat{\theta}\sin\theta), \quad\forall\theta, \hat{\theta}\in[-\pi, \pi], 
  \end{align}
  \end{subequations}
  where $f(\theta, \hat{\theta})$ is defined as follows
  \begin{align}
     f(\theta, \hat{\theta}) = 2(\sin\theta\sin\hat{\theta} + \cos\theta\cos\hat{\theta}) -\gamma'(\theta)(-\sin\hat{\theta}\cos\theta+\cos\hat{\theta}\sin\theta).  
  \end{align}
   Then there holds 
   \begin{equation}\label{inequality2}
   \frac{1}{\left | \vec v \right |}\left(\mat{B}_q(\theta)\vec w\right)\cdot(\vec w-\vec v)\ge\left | \vec w \right |\gamma(\hat{\theta})-\left | \vec v \right |\gamma(\theta), 
   \end{equation}
   where $(-\sin \theta, \cos \theta) = \frac{\vec v}{\left | \vec v \right |}$. 
\end{itemize}
\end{lem}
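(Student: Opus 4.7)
The plan is to treat Case I separately from Cases II and III, since Case I exploits symmetry and positive-definiteness of $\mat B_0(\theta)$, while the other two rely on a direct AM-GM estimate against the anisotropic energy. For Case I I would first observe that under $\gamma(\theta)=\gamma(\pi+\theta)$ and $q=0$, the matrix $\mat B_0(\theta)$ is symmetric: the rotation-reflection factor in its definition becomes a symmetric involution, and the stabilizer proportional to $\mathscr S(\theta)$ is manifestly symmetric. The assumption $\mathscr S(\theta)\geq \mathscr S_0(\theta)$ is, by the very definition of $\mathscr S_0$, exactly what guarantees $\gamma(\theta)(\cos\hat\theta,\sin\hat\theta)\mat B_0(\theta)(\cos\hat\theta,\sin\hat\theta)^\top \geq \gamma(\hat\theta)^2 > 0$ for all $\hat\theta$, i.e.\ positive-definiteness of $\mat B_0(\theta)$. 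Given symmetry and positivity, the claimed inequality reduces to the classical SPD identity: expanding $(\vec u-\vec v)^\top\mat B_0(\theta)(\vec u-\vec v)\geq 0$ yields $2\vec u^\top\mat B_0(\theta)\vec v \leq \vec u^\top\mat B_0(\theta)\vec u + \vec v^\top\mat B_0(\theta)\vec v$, from which Case I follows by rearrangement.

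For Cases II and III, I would use the parametrizations $\vec v/|\vec v|=(-\sin\theta,\cos\theta)^\top$ and $\vec w/|\vec w|=(-\sin\hat\theta,\cos\hat\theta)^\top$ together with the linearity $\mat B_q(\theta)\vec w = |\vec w|\mat B_q(\theta)(\vec w/|\vec w|)$. The target is equivalent to
\begin{equation}
\frac{(\mat B_q(\theta)\vec w)\cdot\vec w}{|\vec v|}+|\vec v|\gamma(\theta) \;\geq\; |\vec w|\gamma(\hat\theta)+\frac{(\mat B_q(\theta)\vec w)\cdot\vec v}{|\vec v|}.
\end{equation}
Applying AM-GM in the form $a/|\vec v|+|\vec v|b\geq 2\sqrt{ab}$ on the left with $a=(\mat B_q(\theta)\vec w)\cdot\vec w$ and $b=\gamma(\theta)$, and then factoring out $|\vec w|$, reduces the problem to the purely angular inequality
\begin{equation}
2\sqrt{\gamma(\theta)\,(\mat B_q(\theta)\hat{\vec w})\cdot\hat{\vec w}}\;\geq\;\gamma(\hat\theta)+(\mat B_q(\theta)\hat{\vec w})\cdot(-\sin\theta,\cos\theta)^\top,
\end{equation}
where $\hat{\vec w}=\vec w/|\vec w|$. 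Expanding the three matrix factors in $\mat B_q(\theta)$ and using the trigonometric identity $-\sin\hat\theta\cos\theta+\cos\hat\theta\sin\theta=\sin(\theta-\hat\theta)$ identifies the two sides precisely with $P_{\varepsilon^2}(\theta,\hat\theta)$ and $Q(\theta,\hat\theta)$, so the reduced inequality becomes the hypothesis $\mathscr S(\theta)\geq \mathscr S_0(\theta)$. Case III differs from Case II only in that, with $q=0$ but without $\gamma(\pi+\theta)$-symmetry, the expansion produces the additional contribution $f(\theta,\hat\theta)$ inside the square root of $P_{\varepsilon^2}$, exactly as recorded in the lemma.

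\textbf{Main obstacle.} The trickiest step is the algebraic bookkeeping in the matrix-product expansion of $\mat B_q(\theta)\hat{\vec w}$: three factors — the $(\gamma,\gamma')$-matrix, the reflection-rotation raised to $1-q$, and the stabilizer proportional to $\mathscr S(\theta)$ — must be multiplied consistently with the sign conventions $\vec n=-\vec X_s^\bot$ and $\theta=\arctan(z_s/r_s)$ used throughout. A secondary subtlety is verifying that the hypothesis $3\gamma(\theta)\geq \gamma(\pi+\theta)$ in Cases II and III ensures non-negativity of the expression under the square root in $P_{\varepsilon^2}$, so that the AM-GM step is legitimate; this is a direct but necessary check at the extremal $\hat\theta$. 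I would lean on the planar analyses in \cite{bao2023symmetrized,li2024structure}, where an analogous structural inequality is established for the same matrix family, and adapt them to the axisymmetric setting by incorporating the radial weight $r$ only after the angular inequality above is in hand.
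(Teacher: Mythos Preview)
Your proposal is correct and is in fact considerably more detailed than the paper's own treatment: the paper simply writes ``We omit the proof here, as it follows similar approaches to those found in Refs.\ \cite{bao2023symmetrized, bao2024structure, bao2024structure1}.'' Your outline---SPD identity for Case~I, AM--GM reduction to the angular inequality $P_{\varepsilon^2}\geq Q$ for Cases~II and~III---is exactly the strategy of those cited works, so you are aligned with the intended approach. One minor remark: symmetry of $\mat B_0(\theta)$ for $q=0$ actually holds without the hypothesis $\gamma(\theta)=\gamma(\pi+\theta)$ (a direct computation of the matrix product shows the off-diagonal entries coincide); that hypothesis enters only in securing positive-definiteness via the choice of $\mathscr S_0(\theta)$.
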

\begin{proof}
 We omit the proof here, as it follows similar approaches to those found in Refs. \cite{bao2023symmetrized, bao2024structure, bao2024structure1}.
\end{proof}
\begin{rem}
Due to the equivalence of $r\mu\vec n = -\left[ \gamma(\theta) + \gamma''(\theta) \right]r\kappa\vec n - \left[\gamma(\theta)z_s + \gamma'(\theta)r_s\right]\vec n$ and $r\mu\vec n = -\left[ \gamma(\theta) + \gamma''(\theta) \right]r\kappa\vec n - \left[\gamma(\theta)z_s + \gamma'(\theta)r_s\right]\vec n = -\left[ r\mat B_q(\theta)\vec X_s \right]_s + \gamma(\theta)\vec e_1$, we know that the continuous model, with the stability function in the matrix $\mat B_q(\theta)$, remains ill-posed. Therefore, from this perspective, it is necessary to introduce the Willmore regularization term, ensuring that the model is well-posed at the continuous level, thereby guaranteeing the well-posedness of the corresponding numerical method. 
\end{rem}

\begin{thm}\label{thm:ener}
(Energy stability) Let $(\vec X^{m+1}, \mu^{m+1}, \mu_\mS^{m+1}, \kappa^{m+1}) \in (\mat V_{a, b}^{h, (r)}\times\mat V_{a, b}^{h, (z)}, \mat V^h, \mat V_0^h, \mat V^h)$ be the numerical solution obtained from numerical approximation \eqref{numerapp}. Then, we can conclude that the total energy is unconditionally stable, i.e.
\begin{align}
W(\vec X^{m+1})\le W(\vec X^m)\le W(\vec X^0), \qquad 0\le m \le M-1. 
\end{align}
\end{thm}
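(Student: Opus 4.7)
The plan is to mimic at the fully discrete level the continuous energy identity $\tfrac{1}{2\pi}\tfrac{dW}{dt} = (r\vec X_t\cdot\vec n,\mu|\vec X_\rho|) - \tfrac{1}{\eta}[r_o(r_o)_t^2 + r_i(r_i)_t^2]$ from the preceding theorem, combining all three weak equations of \eqref{numerapp1}--\eqref{numerapp3} with Lemma~\ref{lem} and two convex-difference inequalities that play the role of the chain rule in the continuous argument.

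First I would test \eqref{numerapp1} with $\phi^h = \mu^{m+1}$, which yields the non-positive mass-flux quantity $(\mu^{m+1}\vec f^{m+\frac{1}{2}}, \vec X^{m+1} - \vec X^m) = -\ttau\,(r^m\mu_\rho^{m+1}, \mu_\rho^{m+1}|\vec X_\rho^m|^{-1}) \le 0$. Next, I would test \eqref{numerapp2} with $\vec\omega^h = \vec X^{m+1} - \vec X^m$, which is admissible because $z^{m+1} - z^m$ vanishes at $\rho = 0,1$ by \eqref{eqn:boundry1}. For the anisotropic surface part I would invoke Lemma~\ref{lem} with $\vec w = \vec X_\rho^{m+1}$, $\vec v = \vec X_\rho^m$ to obtain the pointwise bound $(\mat B_q(\theta^m)\vec X_\rho^{m+1})\cdot(\vec X_\rho^{m+1} - \vec X_\rho^m)/|\vec X_\rho^m| \ge |\vec X_\rho^{m+1}|\gamma(\theta^{m+1}) - |\vec X_\rho^m|\gamma(\theta^m)$; after multiplying by $r^m$, integrating over $\mathbb I$, and absorbing the $-(\gamma(\theta^{m+1})\vec e_1, (\vec X^{m+1} - \vec X^m)|\vec X_\rho^{m+1}|)$ contribution, the telescoping identity $r^m|\vec X_\rho^{m+1}|\gamma(\theta^{m+1}) + (r^{m+1}-r^m)|\vec X_\rho^{m+1}|\gamma(\theta^{m+1}) = r^{m+1}|\vec X_\rho^{m+1}|\gamma(\theta^{m+1})$ recovers exactly the discrete anisotropic surface energy change.

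The main obstacle is the Willmore contribution. I would test \eqref{numerapp3} with $\psi^h = \varepsilon^2\mu_\mS^{m+1}$ and multiply by $\ttau$, labelling its six summands in order by $A,B,C,D,E,F$, so that $A+B+C+D+E+F=0$. A direct inspection shows that the three Willmore terms of \eqref{numerapp2} tested with $\vec X^{m+1}-\vec X^m$ decompose as $\varepsilon^2 T_1 = C + D/2 + E$, $\tfrac{\varepsilon^2}{2}T_2 = A/2$ and $\varepsilon^2 T_3 = -F$; substituting the identity $C+E+F = -(A+B+D)$ produces the clean collapse $-\varepsilon^2 T_1 - \tfrac{\varepsilon^2}{2}T_2 + \varepsilon^2 T_3 = A/2 + B + D/2$. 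To finish I would apply the scalar convexity $a^2 - b^2 \le 2a(a-b)$ (with $a=\mu_\mS^{m+1}$, $b=\mu_\mS^m$) to bound $B$ from below, and the vector convexity $\vec a\cdot(\vec a-\vec b) \ge |\vec b|(|\vec a|-|\vec b|)$ (a consequence of $|\vec a|^2 - |\vec b|^2 \le 2\vec a\cdot(\vec a-\vec b)$, applied with $\vec a = \vec X_\rho^{m+1}$, $\vec b = \vec X_\rho^m$) to bound $D/2$ from below. The piece $A/2$ requires no relaxation, and the three contributions telescope exactly to the discrete Willmore energy change divided by $2\pi$.

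Finally, the substrate contribution in \eqref{numerapp2} telescopes to the substrate energy change via $(r_o^{m+1}+r_o^m)(r_o^{m+1}-r_o^m) = (r_o^{m+1})^2 - (r_o^m)^2$, while the contact-line relaxation $\tfrac{1}{2\eta\ttau}[(r_i^{m+1}+r_i^m)(r_i^{m+1}-r_i^m)^2 + (r_o^{m+1}+r_o^m)(r_o^{m+1}-r_o^m)^2]$ is manifestly non-negative. Assembling the tested equation \eqref{numerapp2}, together with the non-positive mass-flux term from Step~1, the lower bound from Lemma~\ref{lem}, and the Willmore telescoping above, yields $W(\vec X^{m+1}) \le W(\vec X^m)$, and induction on $m$ delivers the full claim. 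The hardest part is the Willmore step: the mixed time-levels deliberately built into the scheme (notably $\tfrac{1}{2}r^{m+1}(\mu_\mS^{m+1})^2\vec X_\rho^{m+1}$ in \eqref{numerapp2} paired against the $|\vec X_\rho^m|$-weighted terms) are engineered precisely so that the two convex inequalities close without leaving any indefinite residue.
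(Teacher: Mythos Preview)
Your proposal is correct and follows essentially the same route as the paper's proof: test \eqref{numerapp1} with $\ttau\mu^{m+1}$, \eqref{numerapp2} with $\vec X^{m+1}-\vec X^m$, \eqref{numerapp3} with $\varepsilon^2\ttau\mu_\mS^{m+1}$, then invoke Lemma~\ref{lem} for the anisotropic part and the two elementary convexity inequalities $(a-b)a\ge\tfrac{1}{2}(a^2-b^2)$ and $\vec a\cdot(\vec a-\vec b)\ge|\vec b|(|\vec a|-|\vec b|)$ for the Willmore part. Your explicit $A$--$F$ bookkeeping of the six summands of \eqref{numerapp3} and the resulting collapse to $A/2+B+D/2$ is exactly what the paper achieves in its identity~\eqref{enerequa}; the paper then bounds these three terms from below in \eqref{enerinequa} via the same two inequalities you cite, arriving at the telescoping Willmore energy difference.
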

\begin{proof}
The energy stability result holds for the \textbf{Case I}, \textbf{Case II} and \textbf{Case III}. However, for simplicity, we present the energy stability proof only for \textbf{Case III}. 

Selecting $\phi^h = \ttau\mu^{m+1}$ in \eqref{numerapp1}, $\vec\omega^h = \vec X^{m+1} - \vec X^m$ in \eqref{numerapp2}, and $\psi^h = \varepsilon ^2\ttau\,\mu_\mS^{m+1}$ in \eqref{numerapp3}, then  by rearranging these three expressions, we can obtain
\begin{align}\label{enerequa}
&\left( r^m\,\mat B_q(\theta^m)\vec X_\rho^{m+1}, \left( \vec X^{m+1} - \vec X^m \right)_\rho \left| \vec X_\rho^m \right|^{-1} \right) + \left( \gamma(\theta^{m+1}), (r^{m+1} - r^m)\left| \vec X_\rho^{m+1} \right| \right) \nonumber \\
&~~~~ + \frac{\varepsilon ^2}{2}\left( (r^{m+1} - r^m)\,\mu_\mS^{m+1}, \mu_\mS^{m+1}\left| \vec X_\rho^m \right| \right) + \varepsilon ^2\left( r^m\left( \mu_\mS^{m+1} - \mu_\mS^m \right), \mu_\mS^{m+1} \left| \vec X_\rho^m \right| \right) + \frac{\varepsilon ^2}{2}\left( r^{m+1}\,(\mu_\mS^{m+1})^2\vec X_\rho^{m+1}, \left( \vec X^{m+1} - \vec X^m \right)_\rho \left| \vec X_\rho^m \right|^{-1} \right) \nonumber \\
&~~~~ - \frac{\sigma}{2}\left( \left( r_o^{m+1} + r_o^m \right)\left( r_o^{m+1} - r_o^m \right) - \left( r_i^{m+1} + r_i^m \right)\left( r_i^{m+1} - r_i^m \right) \right) \nonumber \\
&~~~~ = -\left( r^m\,\mu_\rho^{m+1}, \mu_\rho^{m+1}\left| \vec X_\rho^m \right| \right) -\frac{1}{2\eta\ttau}\left[ \left( r_i^{m+1} + r_i^m \right)\left( r_i^{m+1} - r_i^m \right)^2 + \left( r_o^{m+1} + r_o^m \right)\left( r_o^{m+1} - r_o^m \right)^2 \right]. 
\end{align}
Then, by choosing $\vec w = \vec X_\rho^{m+1}$ and $\vec v = \vec X_\rho^m$ in \eqref{inequality}, we have
\begin{align} \label{inequalityzhu}
\mat B_q(\theta^m)\vec X_\rho^{m+1}\cdot\left( \vec X_\rho^{m+1} - \vec X_\rho^m \right)\left| \vec X_\rho^m \right|^{-1}\ge \gamma(\theta^{m+1})\left| \vec X_\rho^{m+1} \right| - \gamma(\theta^m)\left| \vec X_\rho^m \right|.
\end{align}
By utilizing the properties of matrix $\mat B_q(\theta^m)$ in \eqref{inequalityzhu} and the inequality $(a-b)a\ge \frac{1}{2}a^2 - \frac{1}{2}b^2$, we obtain
\begin{align}\label{enerinequa}
&\left( r^m\,\mat B_q(\theta^m)\vec X_\rho^{m+1}, \left( \vec X^{m+1} - \vec X^m \right)_\rho \left| \vec X_\rho^m \right|^{-1} \right) + \left( \gamma(\theta^{m+1}), (r^{m+1} - r^m)\left| \vec X_\rho^{m+1} \right| \right) \nonumber \\
&~~~~ + \frac{\varepsilon ^2}{2}\left( (r^{m+1} - r^m)\,\mu_\mS^{m+1}, \mu_\mS^{m+1}\left| \vec X_\rho^m \right| \right) + \varepsilon ^2\left( r^m\left( \mu_\mS^{m+1} - \mu_\mS^m \right), \mu_\mS^{m+1} \left| \vec X_\rho^m \right| \right) + \frac{\varepsilon ^2}{2}\left( r^{m+1}\,(\mu_\mS^{m+1})^2\vec X_\rho^{m+1}, \left( \vec X^{m+1} - \vec X^m \right)_\rho \left| \vec X_\rho^m \right|^{-1} \right) \nonumber \\
&~~~~ - \frac{\sigma}{2}\left( \left( r_o^{m+1} + r_o^m \right)\left( r_o^{m+1} - r_o^m \right) - \left( r_i^{m+1} + r_i^m \right)\left( r_i^{m+1} - r_i^m \right) \right) \nonumber \\
& \ge \left( r^{m+1}, \gamma(\theta^{m+1})\left| \vec X_\rho^{m+1} \right| \right) - \left( r^m, \gamma(\theta^m)\left| \vec X_\rho^m \right| \right) + \frac{\varepsilon ^2}{2}\left( (r^{m+1} - r^m)\,\mu_\mS^{m+1}, \mu_\mS^{m+1}\left| \vec X_\rho^m \right| \right) \nonumber \\
&~~~~ + \frac{\varepsilon ^2}{2}\left( r^m\,\left( (\mu_\mS^{m+1})^2 - (\mu_\mS^m)^2 \right), \left| \vec X_\rho^m \right| \right) +  \frac{\varepsilon ^2}{2}\left( r^{m+1}\,(\mu_\mS^{m+1})^2\vec X_\rho^{m+1}, \left( \vec X^{m+1} - \vec X^m \right)_\rho \left| \vec X_\rho^m \right|^{-1} \right) \nonumber \\
&~~~~ - \frac{\sigma}{2}\left( \left( r_o^{m+1} + r_o^m \right)\left( r_o^{m+1} - r_o^m \right) - \left( r_i^{m+1} + r_i^m \right)\left( r_i^{m+1} - r_i^m \right) \right) \nonumber \\
&= \left( r^{m+1}, \gamma(\theta^{m+1})\left| \vec X_\rho^{m+1} \right| \right) - \left( r^m, \gamma(\theta^m)\left| \vec X_\rho^m \right| \right) - \frac{\varepsilon ^2}{2}\left( r^m, (\mu_\mS^m)^2\left| \vec X_\rho^m \right| \right) \nonumber \\
&~~~~ + \frac{\varepsilon ^2}{2}\left( r^{m+1}\,(\mu_\mS^{m+1})^2, \left| \vec X_\rho^m \right| + \vec X_\rho^{m+1}\cdot\left( \vec X_\rho^{m+1} - \vec X_\rho^m
 \right)\,\left| \vec X_\rho^m \right|^{-1} \right) - \frac{\sigma}{2}\left( \left( r_o^{m+1} + r_o^m \right)\left( r_o^{m+1} - r_o^m \right) - \left( r_i^{m+1} + r_i^m \right)\left( r_i^{m+1} - r_i^m \right) \right) \nonumber \\
 &\ge \left( r^{m+1}, \gamma(\theta^{m+1})\left| \vec X_\rho^{m+1} \right| \right) + \frac{\varepsilon ^2}{2}\left( r^{m+1}\,(\mu_\mS^{m+1})^2, \frac{1}{2}\left( \left| \vec X^m \right| + \frac{\left| \vec X^{m+1} \right|^2}{\left| \vec X^m \right|} \right) \right) - \left( r^m, \gamma(\theta^m)\left| \vec X_\rho^m \right| \right) - \frac{\varepsilon ^2}{2}\left( r^m, (\mu_\mS^m)^2\left| \vec X_\rho^m \right| \right) \nonumber \\
 &~~~~ - \frac{\sigma}{2}\left( \left( r_o^{m+1} + r_o^m \right)\left( r_o^{m+1} - r_o^m \right) - \left( r_i^{m+1} + r_i^m \right)\left( r_i^{m+1} - r_i^m \right) \right) \nonumber \\
 &\ge \left( r^{m+1}, \gamma(\theta^{m+1})\left| \vec X_\rho^{m+1} \right| \right) + \frac{\varepsilon ^2}{2}\left( r^{m+1}\,(\mu_\mS^{m+1})^2, \left| \vec X_\rho^{m+1} \right| \right) - \frac{\sigma}{2}\left( (r_o^{m+1})^2 - (r_i^{m+1})^2 \right) \nonumber \\
 &~~~~ - \left( r^m, \gamma(\theta^m)\left| \vec X_\rho^m \right| \right) - \frac{\varepsilon ^2}{2}\left( r^m, (\mu_\mS^m)^2\left| \vec X_\rho^m \right| \right) + \frac{\sigma}{2}\left( (r_o^m)^2 - (r_i^m)^2 \right) = \frac{1}{2\pi}\left( W(\vec X^{m+1}) - W(\vec X^m) \right). 
\end{align}
Finally, combining \eqref{enerequa} and \eqref{enerinequa}, we can obtain
\begin{align}
W(\vec X^{m+1}) - W(\vec X^m)\le -2\pi\left( r^m\,\mu_\rho^{m+1}, \mu_\rho^{m+1}\left| \vec X_\rho^m \right| \right) -\frac{\pi}{\eta\ttau}\left[ \left( r_i^{m+1} + r_i^m \right)\left( r_i^{m+1} - r_i^m \right)^2 + \left( r_o^{m+1} + r_o^m \right)\left( r_o^{m+1} - r_o^m \right)^2 \right]\le 0, 
\end{align}
which implies the property of energy stability. Therefore, we have completed the proof.
\end{proof}
\begin{thm}\label{thm:vol}
(Volume conservation) Let $(\vec X^{m+1}, \mu^{m+1}, \mu_\mS^{m+1}, \kappa^{m+1}) \in (\mat V_{a, b}^{h, (r)}\times\mat V_{a, b}^{h, (z)}, \mat V^h, \mat V_0^h, \mat V^h)$ be the numerical solution obtained from the numerical approximation \eqref{numerapp}. Then we have
\begin{align}
\text{vol}(\vec X^{m+1}) - \text{vol}(\vec X^m) = 0,\qquad m=0, 1\ldots, M-1. 
\end{align}
\begin{proof}
By choosing $\phi^h = \ttau$ in \eqref{numerapp1} and recalling
\begin{align}
\text{vol}(\vec X^{m+1}) - \text{vol}(\vec X^{m}) = 2\pi\left( \vec X^{m+1} - \vec X^m, \vec f^{m+\frac{1}{2}} \right) \qquad \text{for} \qquad \vec X^{m+1},~\vec X^m\in \mat V_{a,b}^{h, (r)}\times \mat V_{a,b}^{h, (z)}, 
\end{align}
we directly derive the volume conservation. 
\end{proof}
\end{thm}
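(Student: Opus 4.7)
The plan is to reduce the statement to a single application of \eqref{numerapp1} with a constant test function, once the right discrete flux identity is in place. Specifically, I would establish first the purely algebraic identity
\begin{equation}
\text{vol}(\vec X^{m+1}) - \text{vol}(\vec X^{m}) = 2\pi\left( \vec X^{m+1} - \vec X^m, \vec f^{m+\frac{1}{2}} \right)
\end{equation}
for any pair $\vec X^{m+1}, \vec X^m \in \mat V_{a,b}^{h,(r)}\times \mat V_{a,b}^{h,(z)}$, with $\vec f^{m+\frac{1}{2}}$ as defined. Once this is proved, I would test \eqref{numerapp1} with the constant function $\phi^h \equiv \ttau \in \mat V^h$. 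Since $\phi^h_\rho = 0$, the second term drops out identically, leaving $\left( \vec X^{m+1} - \vec X^m, \vec f^{m+\frac{1}{2}} \right) = 0$, and the identity then delivers $\text{vol}(\vec X^{m+1}) = \text{vol}(\vec X^m)$.

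The substantive content lies in verifying the discrete flux identity. Starting from $\text{vol}(\vec X) = 2\pi \int_{\mathbb I} r\, z\, r_\rho\, d\rho$, I would split the integral element by element and rewrite the difference $r^{m+1}z^{m+1}r_\rho^{m+1} - r^m z^m r_\rho^m$ as a telescoping combination adapted to the Crank--Nicolson-like structure of $\vec f^{m+\frac{1}{2}}$. The key observation is that on each $\mathbb I_j$ the integrands are cubic polynomials in $\rho$, so the integrals can be evaluated exactly in terms of the four nodal pairs $(r_{j-1}^m, z_{j-1}^m), (r_j^m, z_j^m), (r_{j-1}^{m+1}, z_{j-1}^{m+1}), (r_j^{m+1}, z_j^{m+1})$. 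The very choice of coefficients $\{2,2,1,1\}/6$ in
\begin{equation}
\vec f^{m+\frac{1}{2}} = -\tfrac{1}{6}\bigl[ 2r^m\,\vec X_\rho^m + 2r^{m+1}\,\vec X_\rho^{m+1} + r^m\,\vec X_\rho^{m+1} + r^{m+1}\,\vec X_\rho^m \bigr]^\bot
\end{equation}
is tuned so that its $L^2$-pairing with $\vec X^{m+1}-\vec X^m$ reproduces exactly these element-wise cubic integrals, after using $(\vec X^{m+1}-\vec X^m)\cdot (\vec X_\rho)^\bot = z_\rho(r^{m+1}-r^m) - r_\rho(z^{m+1}-z^m)$ and expanding.

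The hard part is this algebraic matching: grouping like terms and checking that all cross-products $r_{j-1}^{\,a} r_j^{\,b} z_k^{\,c}$ (with $a,b,c\in\{m,m+1\}$) carry identical coefficients on both sides. I would attack this by introducing short-hand $r^{m+\frac{1}{2}}:=\tfrac{1}{2}(r^m+r^{m+1})$ and similarly for $z$ and $\vec X_\rho$, rewriting the integrand difference as a sum of terms symmetric in $m$ and $m+1$, and comparing with the symmetric expansion of $(\vec X^{m+1}-\vec X^m)\cdot \vec f^{m+\frac{1}{2}}$; alternatively one can integrate by parts so that only nodal evaluations survive, at which point the match becomes a finite combinatorial check. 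The boundary contributions from this integration by parts vanish thanks to the contact-line conditions $z(0)=z(1)=0$ (or $r(0)=0$ in the hole-free case), consistent with the function spaces $\mat V_{a,b}^{h,(z)}$ used in \eqref{numerapp}. Once the identity is secured, the volume conservation itself is essentially immediate.
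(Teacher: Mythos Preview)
Your proposal is correct and follows exactly the paper's approach: test \eqref{numerapp1} with the constant $\phi^h=\ttau$ and invoke the discrete flux identity $\text{vol}(\vec X^{m+1})-\text{vol}(\vec X^m)=2\pi\bigl(\vec X^{m+1}-\vec X^m,\vec f^{m+\frac12}\bigr)$. The paper simply ``recalls'' this identity without proof, whereas you sketch its element-wise verification; your additional detail is sound and not at odds with the paper.
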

\begin{rem}
During the numerical tests, we employ the Newton-Raphson iteration to compute the implicit scheme \eqref{numerapp}. The iterative process is repeated until 
\begin{equation}
\left \| \vec X^{m+1, i+1} - \vec X^{m+1, i} \right \|_{L^\infty} + \left \| \mu^{m+1, i+1} - \mu^{m+1, i} \right \|_{L^\infty} + \left \| \mu_\mS^{m+1, i+1} - \mu_\mS^{m+1, i} \right \|_{L^\infty} \le \text{tol}. 
\end{equation}
Here, tol is a predefined tolerance level, ensuring that the iteratine process continues until the solution achieves a certain level of accuracy. 
\end{rem}

\section{Numerical results}\label{sec6}

In this section, we present numerical experiments to evaluate the performance of our proposed numerical schemes. These experiments validate the structure-preserving properties of the schemes, including volume conservation and energy stability, as well as their convergence results and 
mesh quality. Additionally, we simulate various processes of SSD to further demonstrate the applicability of the schemes.

To check the mesh quality and the volume conservation of the scheme, we define the mesh ratio $R^h(t)$ and loss volume $\Delta V$ at $t_m$ as follows 
\begin{equation} 
R^h(t)\big|_{t=t_m} := \frac{\text{max}_{1\le j \le J}\left| \vec X^m_j - X^m_{j-1}\right|}{\text{min}_{1\le j \le J}\left| \vec X^m_j - X^m_{j-1}\right|}, \quad \Delta V(t)\big|_{t=t_m} = \frac{\text{vol}(\vec X^m) - \text{vol}(\vec X^0)}{\text{vol}(\vec X^0)}, \quad
\overline{W}_c(t)|_{t = t_m} = W_c^m,  \quad m\ge 0. 
\end{equation}
In the numerical experiments, we choose the 4-fold anisotropy: $\gamma(\theta) = 1 + \beta\cos (4\theta)$ as the energy function, where $\beta$ represents degree of anisotropy. when $\beta = 0$, it denotes isotropic; when $0 < \beta\le 1/15$, it denotes weakly anisotropic; when $\beta\ge 1/15$, it denotes strongly anisotropic. During the numerical experiments, we choose the Newton-Raphson iteration to calculate the semi-implicit scheme \eqref{numerapp} and select the tolerance $\text{tol} = 1e - 8$. 

\textbf{Example 1} (Convergence tests) We test convergence by quantifying the difference between surfaces enclosed by the curves $\Gamma_1$ and $\Gamma_2$, using the manifold distance defined by
\begin{equation*}
\text{Md}(\Gamma_1, \Gamma_2):=\left |(\Omega_1\backslash\Omega_2)\cup(\Omega_2\backslash\Omega_1) \right |=\left |\Omega_1 \right |+\left |\Omega_2 \right |-2\left |\Omega_1\cap\Omega_2 \right |, \nonumber
\end{equation*}
with $\Omega_i$, $i=1, 2$ denoting the region enclosed by $\Gamma_i$, and $| \cdot |$ representing the area of the region. Let $\vec X^m$ denote numerical approximation of surface with mesh size $h$ and time step $\ttau$, then introduce approximate solution between interval $[t_m, t_{m+1}]$ as
\begin{equation}
\vec X_{h, \ttau}(\rho, t)=\frac{t-t_m}{\ttau}\vec X^m(\rho)+\frac{t_m-t}{\ttau}\vec X^{m+1}(\rho), \quad\rho\in\mathbb{I}. 
\end{equation}
We further define the errors by  
\begin{equation}
e_{h, \ttau}(t)=\text{Md}(\Gamma_{h, \ttau}, \Gamma_{\frac{h}{2}, \frac{\ttau}{4}}). 
\end{equation}
In this example, we choose the initial data $\vec X^0(\rho) = (10 + \cos(\pi\rho), \sin(\pi\rho))$. 
Figures \ref{fig:1}-\ref{fig:2} respectively illustrate the numerical errors and their orders for the structure-preserving method under various values of the anisotropic strength parameter $\beta$ and the regularization parameter $\varepsilon$.
From the figures, we find that the convergence rate with respect to the mesh size $h$ is of the second order, aligning with our expected results.

\begin{figure}[!htp]
\centering
\includegraphics[width=0.47\textwidth]{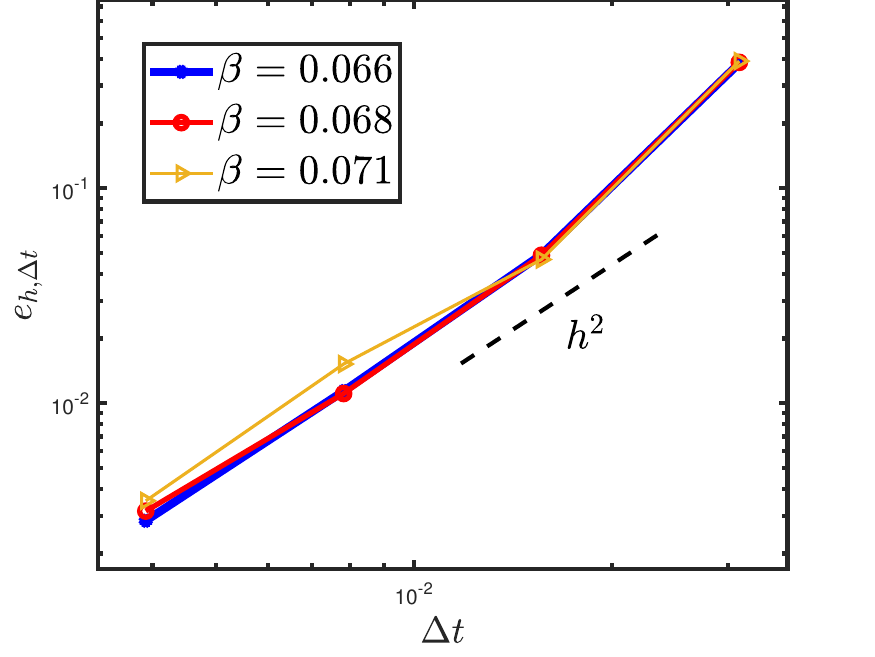}
\includegraphics[width=0.47\textwidth]{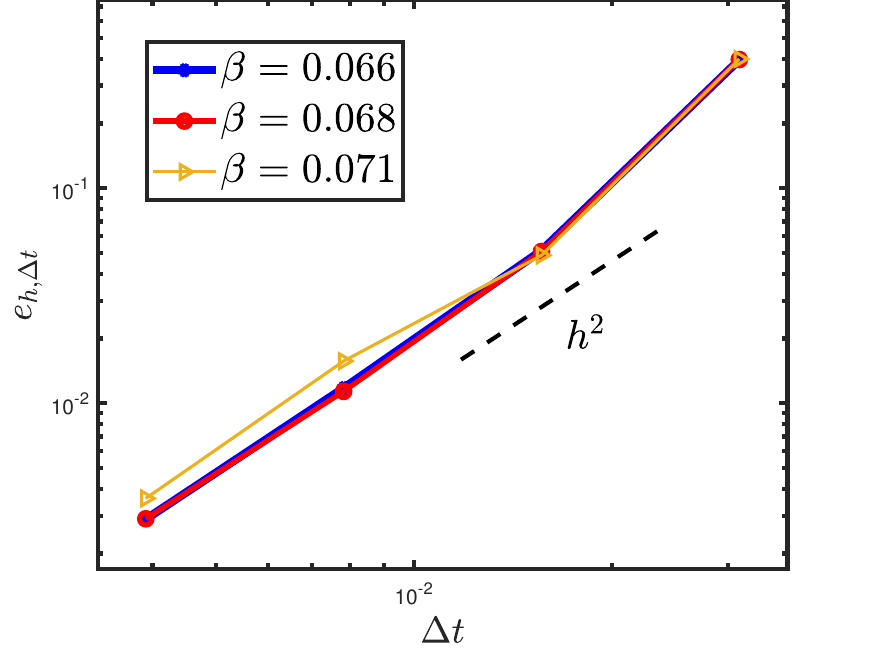}
\caption{Plot of the numberical errors at $\ttau_m = 1$(left panel) and $\ttau_m$ = 2(right panel) for 4-fold anisotropy. The initial data $\vec X^0(\rho) = (10 + \cos(\pi\rho), \sin(\pi\rho))$, and the parameters are selected as $\eta = 100$, $\sigma = -0.6$, $\varepsilon = 0.01$. }
\label{fig:1}
\end{figure}

\begin{figure}[!htp]
\centering
\includegraphics[width=0.47\textwidth]{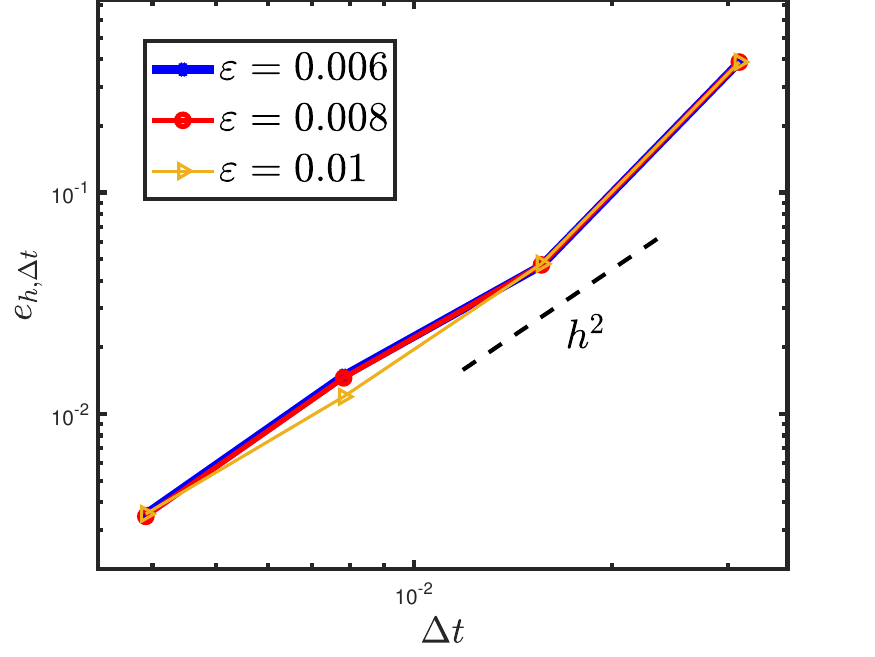}
\includegraphics[width=0.47\textwidth]{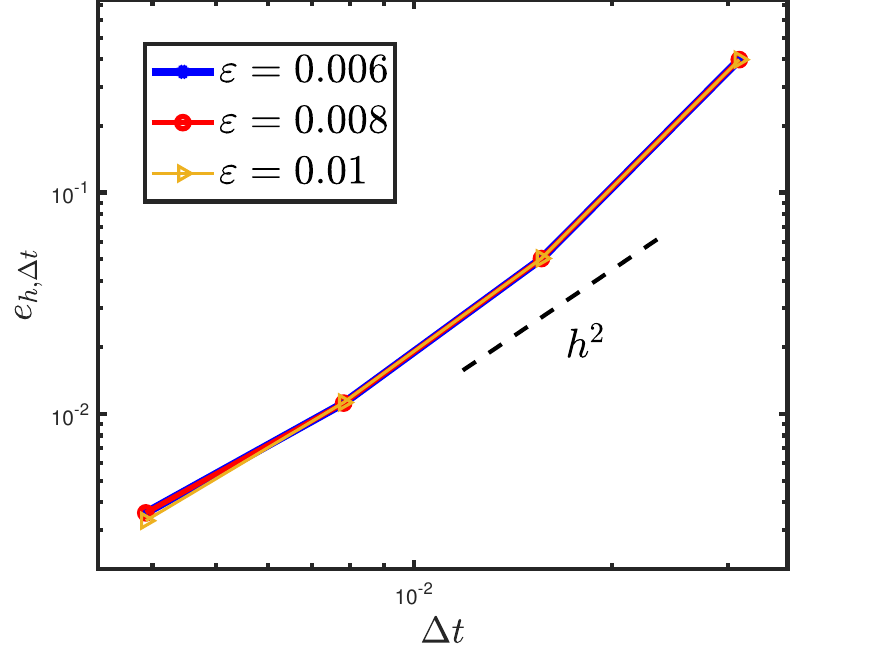}
\caption{Plot of numberical errors at $\ttau_m = 1$(left panel) and $\ttau_m$ = 2(right panel) for 4-fold anisotropy. The initial data $\vec X^0(\rho) = (10 + \cos(\pi\rho), \sin(\pi\rho))$and the parameters are selected as $\eta = 100$, $\sigma = -0.6$, $\beta = 0.07$. }
\label{fig:2}
\end{figure}

\textbf{Example 2} (Energy stability \& Volume conservation) In this example, we test the energy stability and volume conservation of the structure-preserving method \eqref{numerapp}. 
In the tests, two types of initial values are considered: 
 $\vec X^0 = (10 + \cos(\pi\rho), \sin(\pi\rho))$ and $\vec X^0 = ( \cos(\pi\rho/2), \sin(\pi\rho/2))$. 
For both types of boundary conditions, by varying the time step size \(\Delta t\), the anisotropic parameter \(\beta\), and the regularization parameter \(\epsilon\), we plot the energy ratio \(E(t)/E(0)\) of the structure-preserving method in Figures \ref{fig:3}-\ref{fig:4}. The results demonstrate energy stability across all cases, confirming the theoretical findings presented in Theorem \ref{thm:ener}. 
Subsequently, for the type of boundary condition $\vec X^0 = (10 + \cos(\pi\rho), \sin(\pi\rho))$ , by selecting different values of the parameter \(\beta\), we plot the volume change over time in Figure \ref{fig:5}. The figures clearly demonstrate volume conservation, consistent with Theorem \ref{thm:vol}.

\begin{figure}[!htp]
\centering
\includegraphics[width=0.45\textwidth]{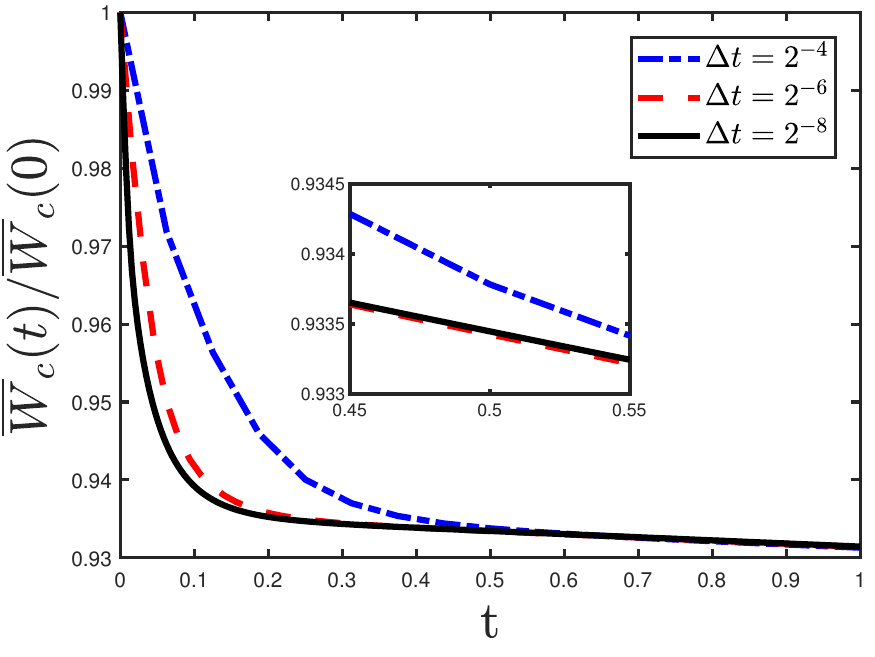}\hspace{0.5cm}
\includegraphics[width=0.45\textwidth]{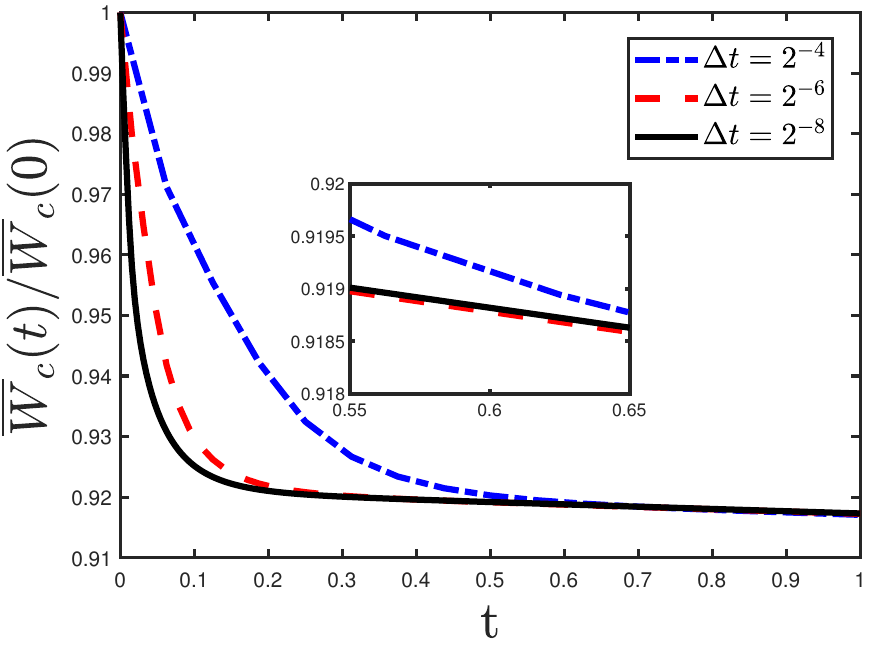}
\caption{The time history of the energy ratio $E(t)/E(0)$ employing structure-preserving method with $\beta = 0.07$ (left panel) and $\beta = 0.1$ (right panel). The initial data $\vec X^0(\rho) = (10 + \cos(\pi\rho), \sin(\pi\rho))$, and the parameters are selected as $\eta = 100$, $\sigma = -0.6$, $\varepsilon  = 0.01$. }
\label{fig:3}
\end{figure}

\begin{figure}[!htp]
\centering
\includegraphics[width=0.45\textwidth]{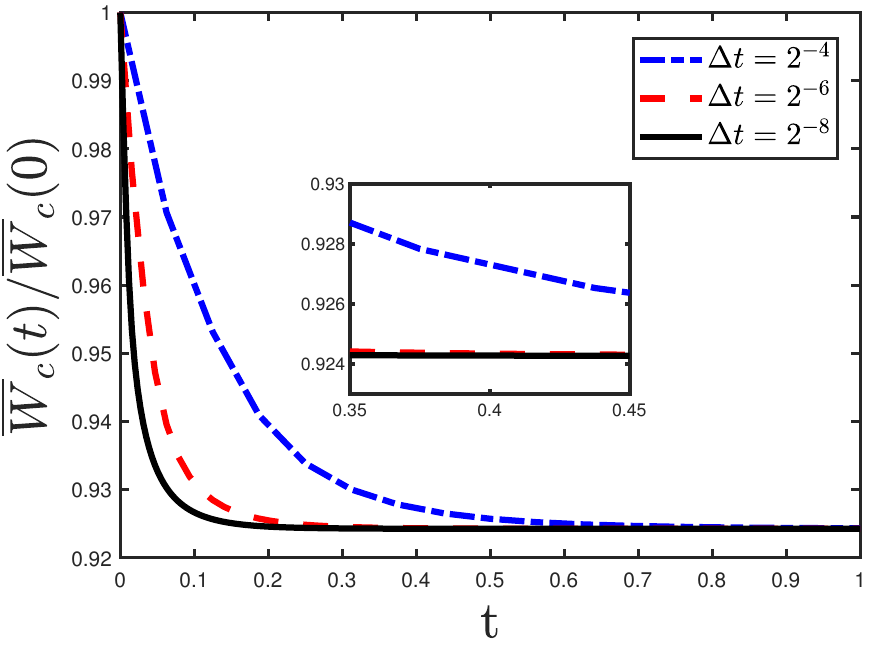}\hspace{0.5cm}
\includegraphics[width=0.45\textwidth]{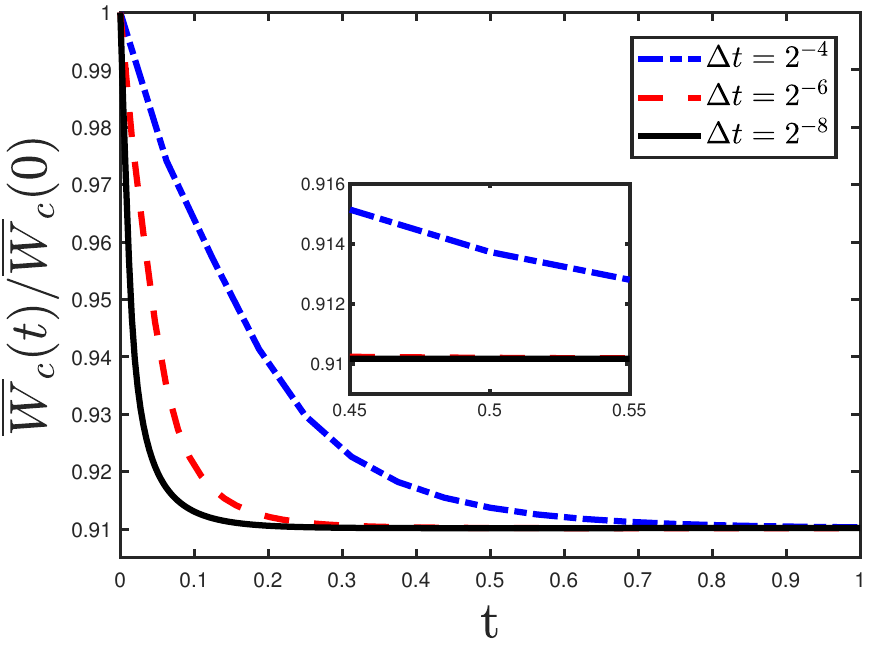}
\caption{The time history of the energy ratio $E(t)/E(0)$ employing structure-preserving method with $\beta = 0.07$ (left panel) and $\beta = 0.1$ (right panel). We choose the initial data $\vec X^0(\rho) = (\cos(\pi\rho/2), \sin(\pi\rho/2))$. The parameters are selected as $\eta = 100$, $\sigma = -0.6$, $\varepsilon  = 0.005$. }
\label{fig:4}
\end{figure}

\begin{figure}[!htp]
\centering
\includegraphics[width=0.45\textwidth]{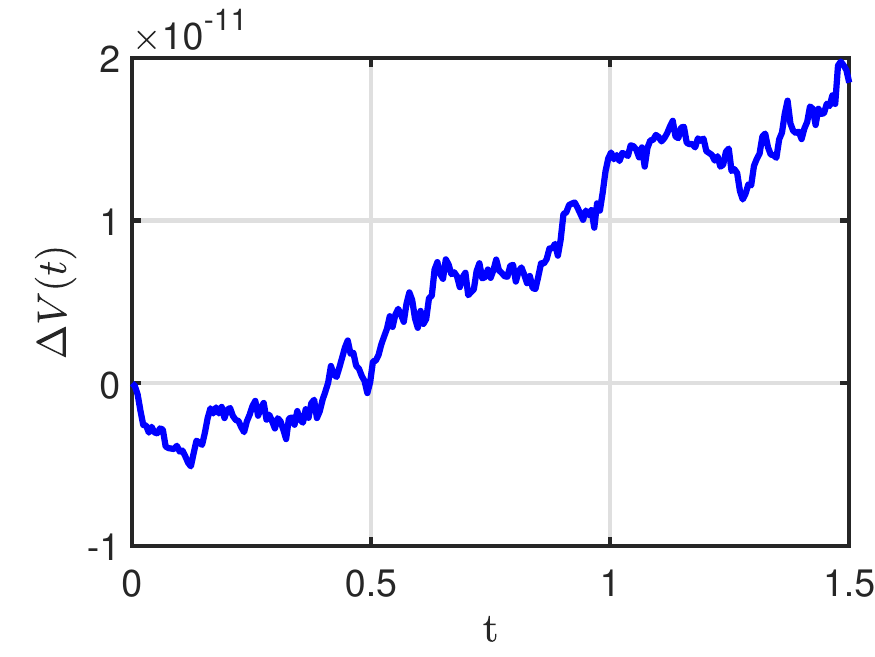}\hspace{0.5cm}
\includegraphics[width=0.45\textwidth]{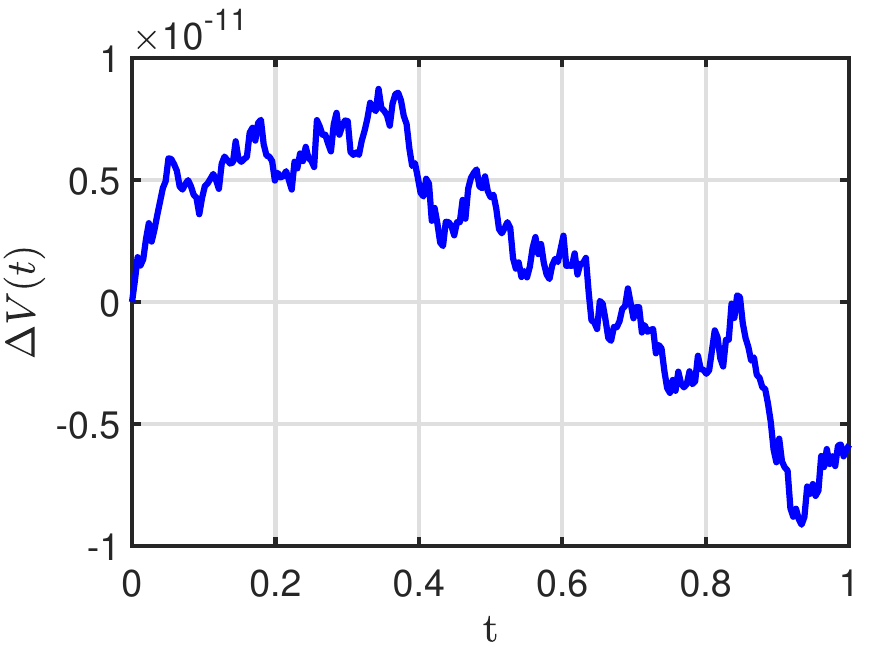}\hspace{0.5cm}
\includegraphics[width=0.45\textwidth]{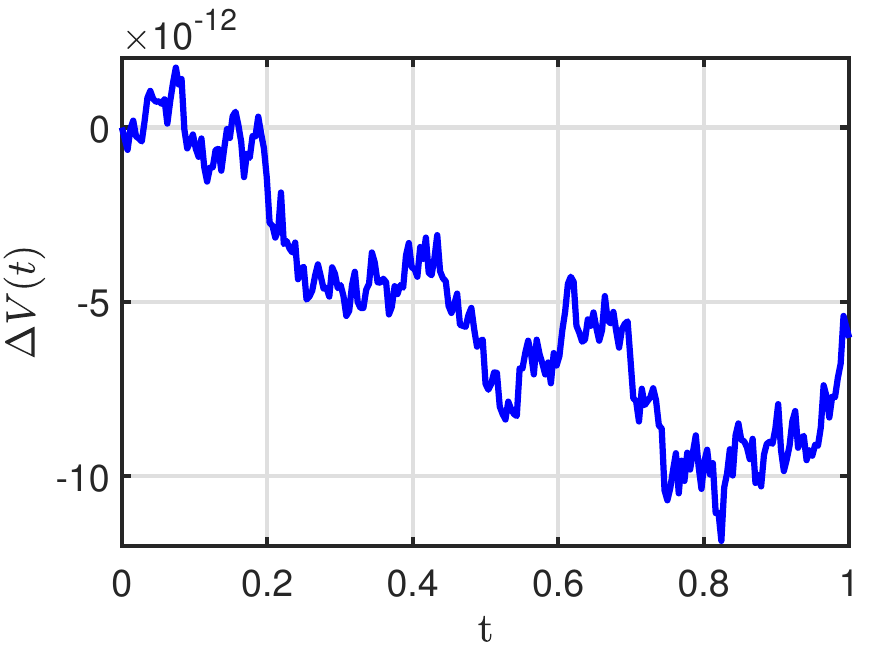}\hspace{0.5cm}
\includegraphics[width=0.45\textwidth]{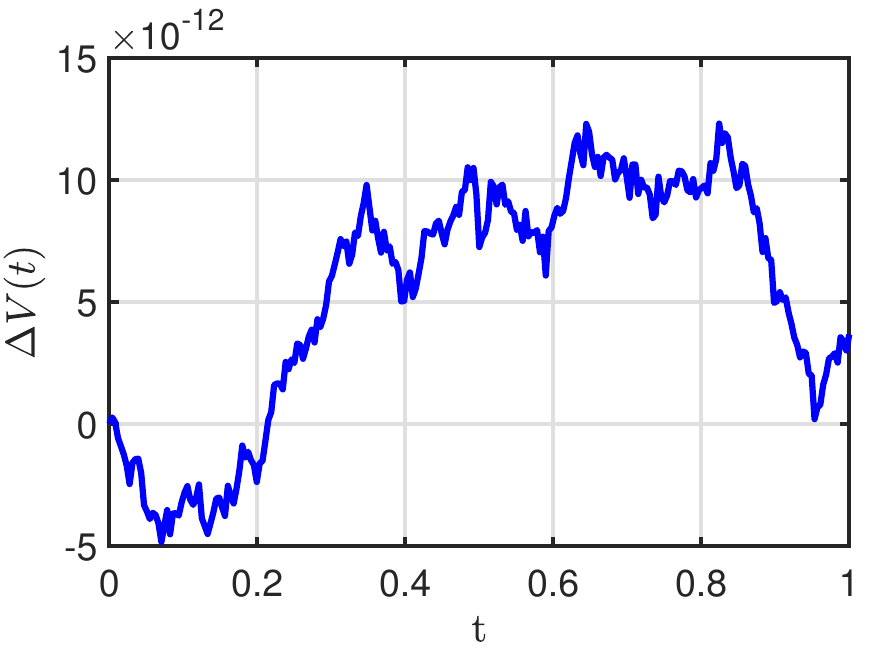}
\caption{The time history of the volume loss $\Delta V$. We choose the initial data $\vec X^0(\rho) = (10 + \cos(\pi\rho), \sin(\pi\rho))$. The parameters are selected as $\beta = 0.07, 0.08, 0.09, 0.1$ $\eta = 100$, $\sigma = -0.6$, $\varepsilon  = 0.01$, $J = 128$, $\ttau = 1/256$. }
\label{fig:5}
\end{figure}

\textbf{Example 3} (Mesh quality) In this example, our primary focus is on evaluating the mesh quality of the structure-preserving method throughout its evolution. The principal reason for presenting this example is to assess the influence of incorporating regularization terms on the quality of the mesh.
Throughout the tests, we utilize the same initial data and material parameters as used in Example 2. 
Figures \ref{fig:6}-\ref{fig:7} depict the ratio of the maximum to minimum mesh sizes throughout the evolution process, comparing scenarios both with and without the inclusion of regularization terms.
These numerical experiments show that adding the Willmore regularization term greatly enhances the mesh quality of the structure-preserving method, highlighting its importance.

\begin{figure}[!htp]
\centering
\includegraphics[width=0.45\textwidth]{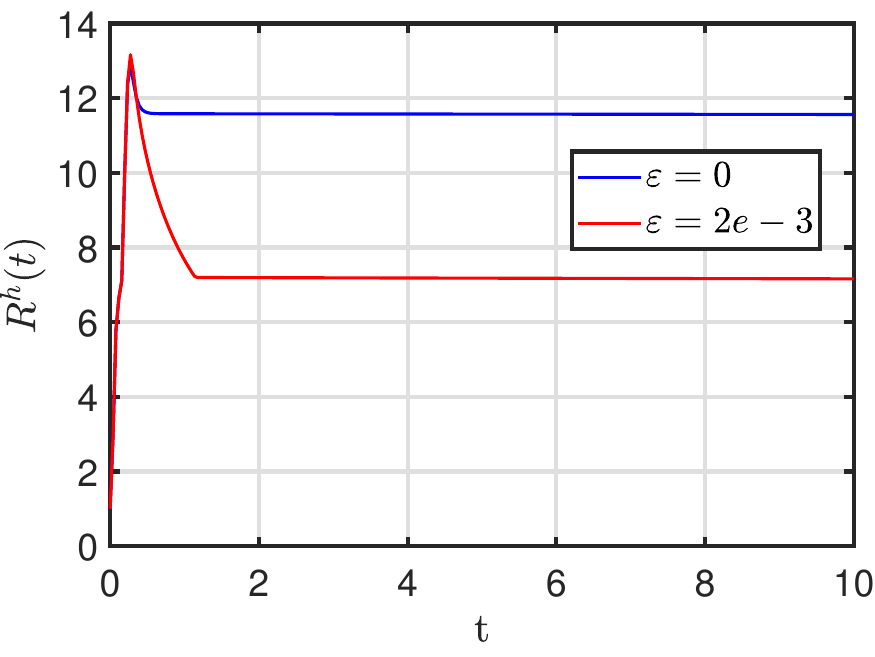}\hspace{0.5cm}
\includegraphics[width=0.45\textwidth]{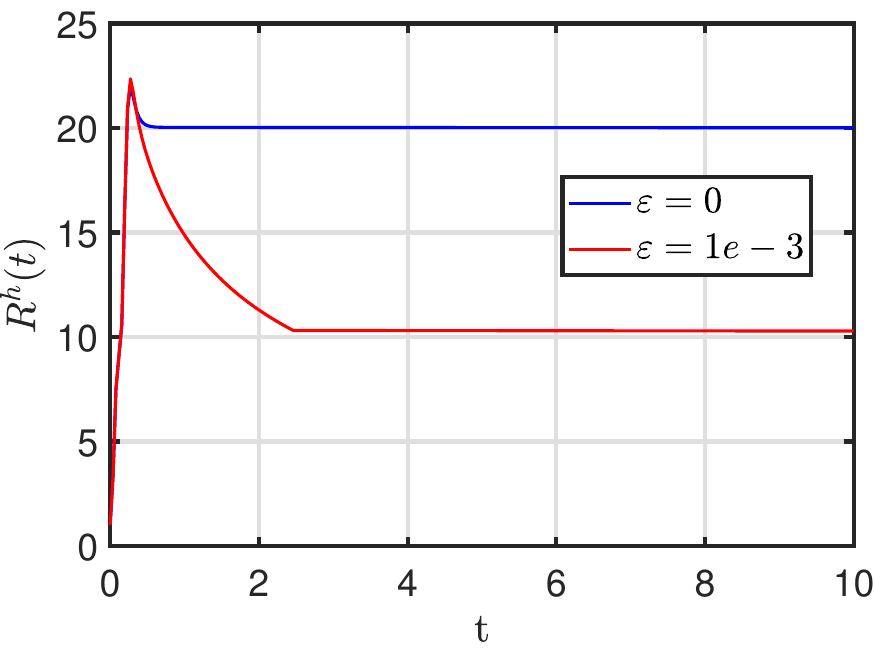}\hspace{0.5cm}
\includegraphics[width=0.45\textwidth]{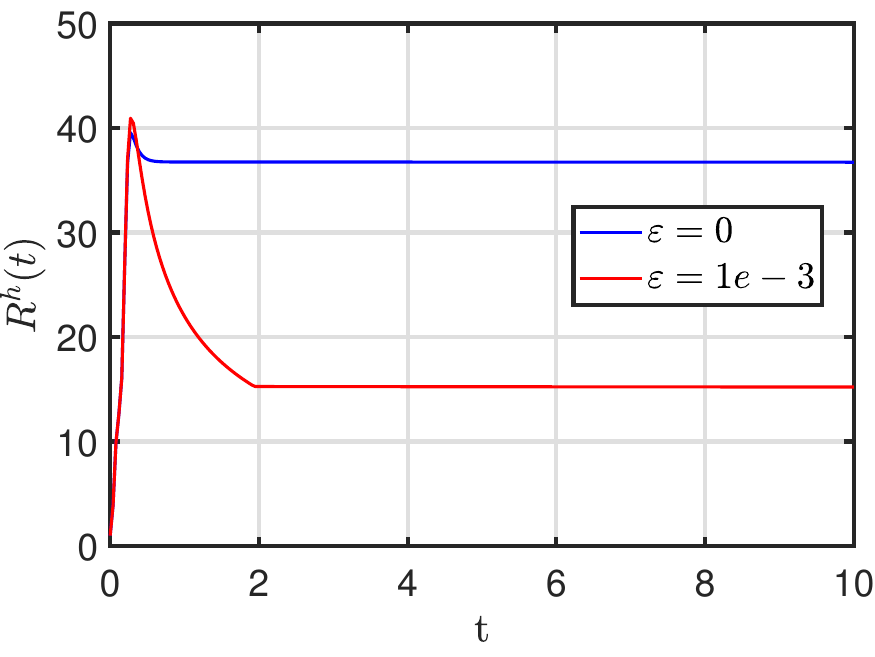}\hspace{0.5cm}
\includegraphics[width=0.45\textwidth]{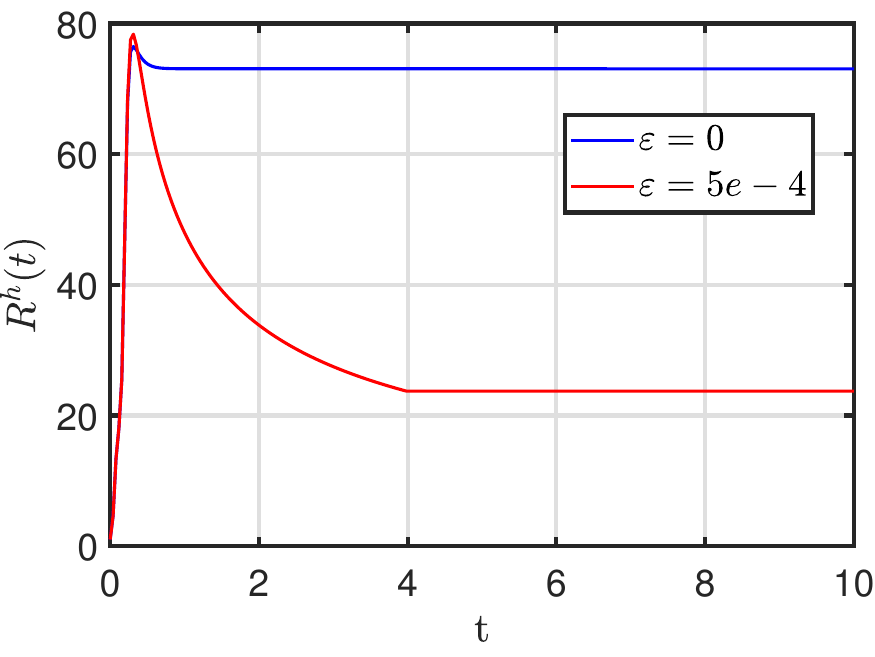}
\caption{Time evolution of the mesh ratio $R^h(t)$ for the four cases of $\beta = 0.35, 0.4, 0.45, 0.5$ . The initial data $\vec X^0(\rho) = (10 + \cos(\pi\rho), \sin(\pi\rho))$, and the parameters are selected as $\eta = 100$, $\sigma = -0.6$, $J = 65$, $\ttau = 5/128$. }
\label{fig:6}
\end{figure}

\begin{figure}[!htp]
\centering
\includegraphics[width=0.45\textwidth]{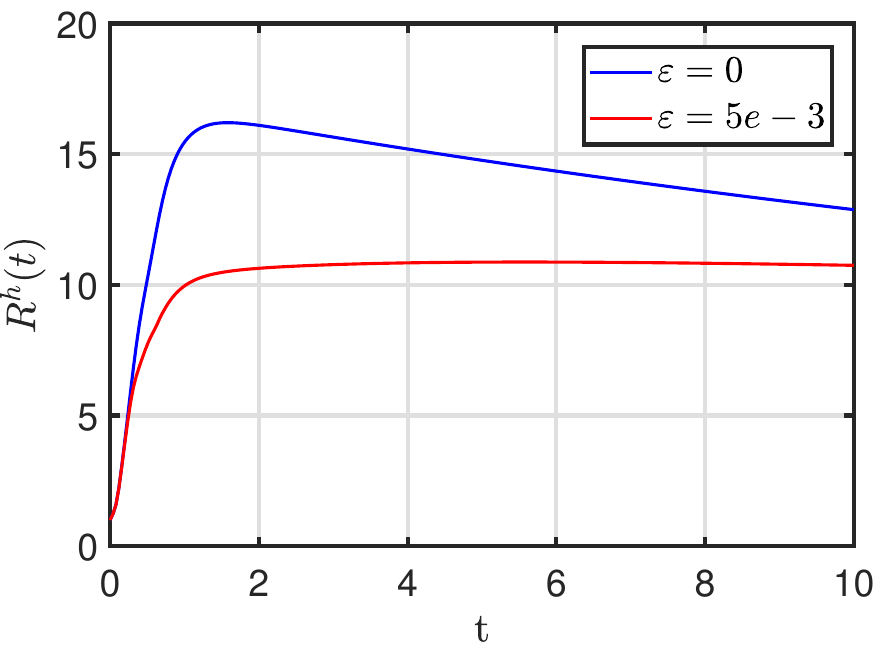}\hspace{0.5cm}
\includegraphics[width=0.45\textwidth]{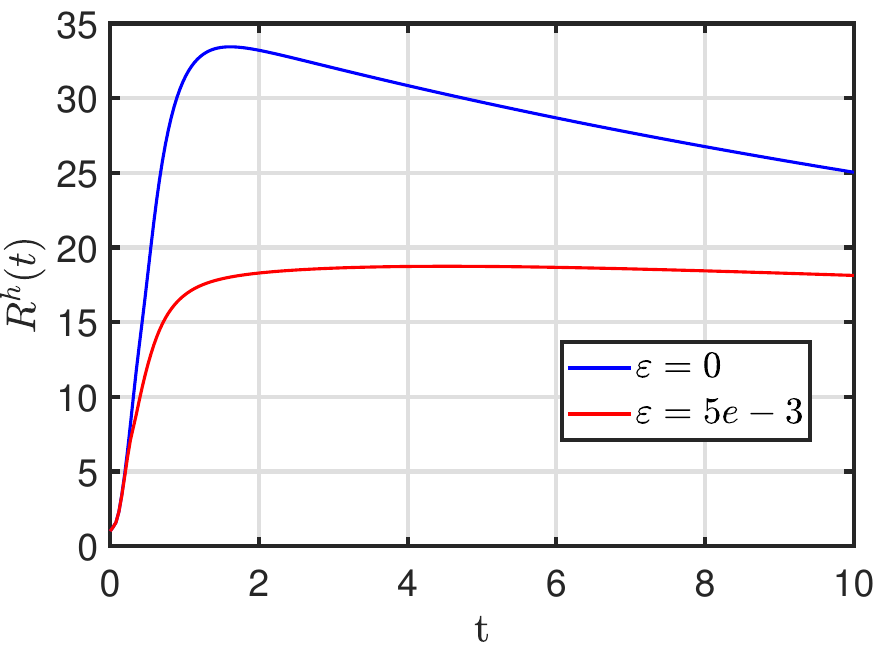}\hspace{0.5cm}
\includegraphics[width=0.45\textwidth]{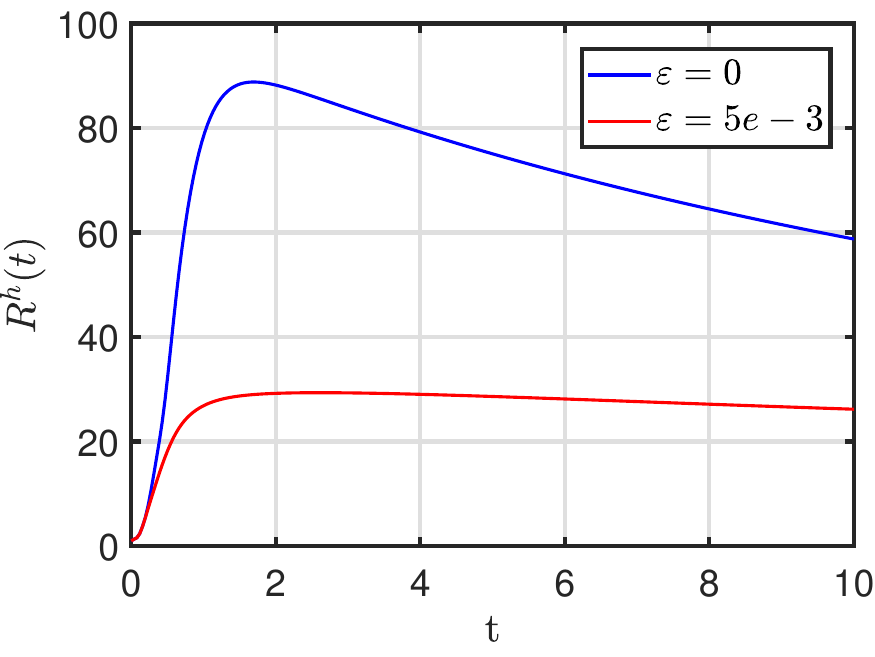}\hspace{0.5cm}
\includegraphics[width=0.45\textwidth]{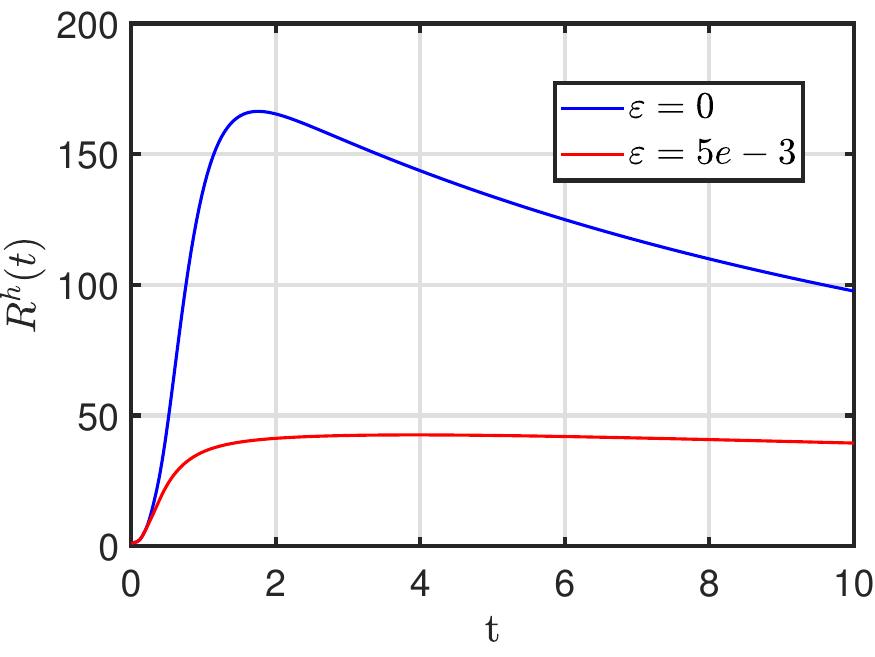}
\caption{Time evolution of the mesh ratio $R^h(t)$ for the four cases of $\beta = 0.12, 0.15, 0.18, 0.2$. The initial data $\vec X^0(\rho) = (\cos(\pi\rho/2), \sin(\pi\rho/2))$, and the parameters are selected as $\eta = 100$, $\sigma = -0.6$, $J = 65$, $\ttau = 5/128$. }
\label{fig:7}
\end{figure}

\textbf{Example 4} (Equilibrium state \& Pinch-off) 
In this example, our focus is primarily on the intrinsic mechanisms involved in the evolution of the thin film and the shape it settles into when it reaches a stable state.
In Figure \ref{fig:8}, we depict the impact of varying $\sigma$ values—specifically, $\sigma = 0.6$, $0$, and $-0.6$—on the equilibrium morphology. The initial configuration is set as a semicircle, defined by $\vec X^0 = (4 + \cos(\pi\rho), \sin(\pi\rho))$. In Figure \ref{fig:9}, we present the evolution curve leading to equilibrium and the associated axisymmetric surface for the structure-preserving method.
In Figure \ref{fig:9}, we also simulate the hole shrinkage effect of SSD, and find that the central hole in the film gradually becomes smaller over time. 
Finally, we study the pinch-off effect that occurs during the evolution of the film. Figures \ref{fig:10}-\ref{fig:11} show the results of pinch-off occurring at two different boundaries, with the initial data $\vec X^0 = (20 + 8\cos(\pi\rho), 0.14\sin(\pi\rho))$ and $\vec X^0 = (6\cos(\pi\rho/2), 0.2\sin(\pi\rho/2))$. We observe that this effect happens when the film becomes very long and flat.

\begin{figure}[!htp]
\centering
\includegraphics[width=0.3\textwidth]{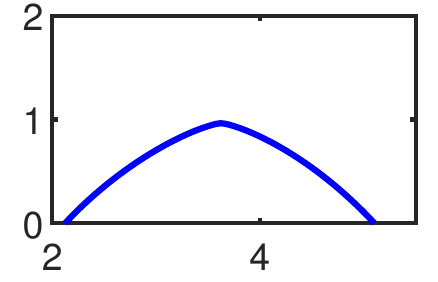}\hspace{0.5cm}
\includegraphics[width=0.3\textwidth]{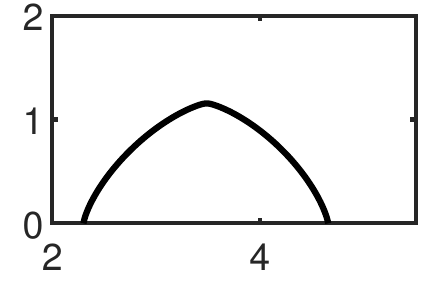}\hspace{0.5cm}
\includegraphics[width=0.3\textwidth]{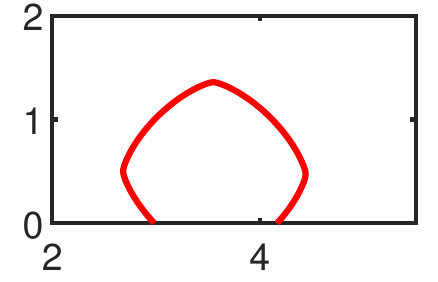}\hspace{0.5cm}
\includegraphics[width=0.3\textwidth]{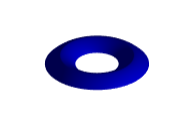}\hspace{0.5cm}
\includegraphics[width=0.3\textwidth]{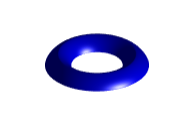}\hspace{0.5cm}
\includegraphics[width=0.3\textwidth]{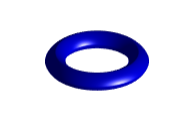}
\caption{On the upper panel we present the generation curves of the equilibrium state for $\sigma = 0.6, 0, -0.6$. On the lower panel, we present the corresponding axisymmetric surface. The initial data $\vec X^0(\rho) = (4 + \cos(\pi\rho), \sin(\pi\rho))$, and the parameters are selected as $\eta = 100$, $\sigma = -0.6$, $J = 100$, $\ttau = 1/100$, $\beta = 0.07$. }
\label{fig:8}
\end{figure}

\begin{figure}[!htp]
\centering
\includegraphics[width=0.6\textwidth]{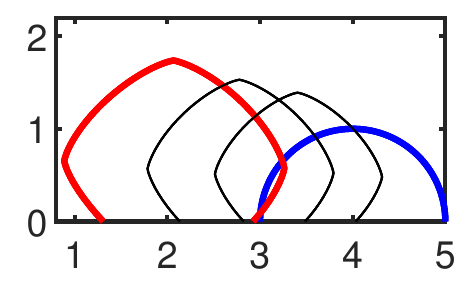}\hspace{10cm}
\includegraphics[width=0.24\textwidth]{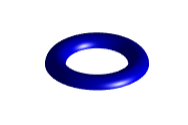}
\includegraphics[width=0.24\textwidth]{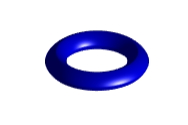}
\includegraphics[width=0.24\textwidth]{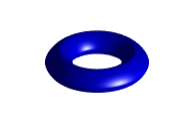}
\includegraphics[width=0.24\textwidth]{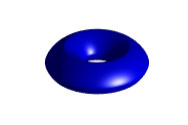}
\caption{On the upper panel we present the generating curves $\Gamma^m$ at $t = 0, 2.5, 6, 8$. On the lower panel, we show the corresponding axisymmetric surface $S^m$ generated by $\Gamma^m$. The initial data $\vec X^0(\rho) = (4 + \cos(\pi\rho), \sin(\pi\rho))$. Here $J = 100$, $\ttau = 1/50$, $\sigma = -0.6$, $\varepsilon = 0.001$. }
\label{fig:9}
\end{figure}

\begin{figure}[!htp]
\centering
\includegraphics[width=0.3\textwidth]{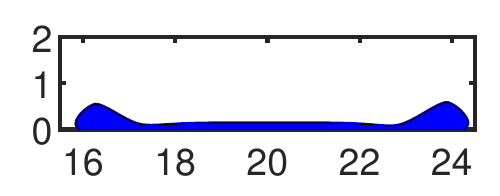}\hspace{0.5cm}
\includegraphics[width=0.3\textwidth]{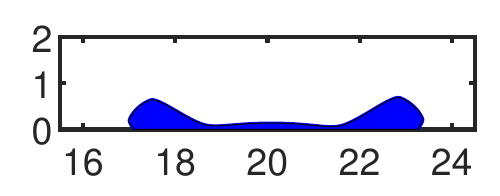}\hspace{0.5cm}
\includegraphics[width=0.3\textwidth]{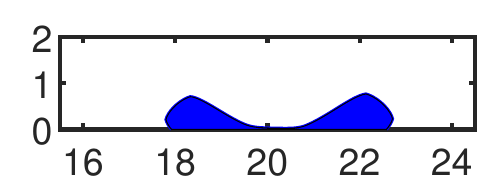}\hspace{0.5cm}
\includegraphics[width=0.3\textwidth]{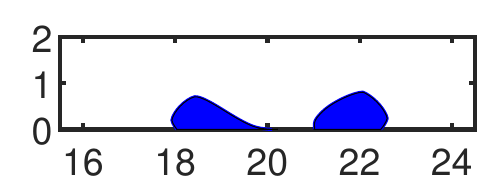}\hspace{0.5cm}
\includegraphics[width=0.3\textwidth]{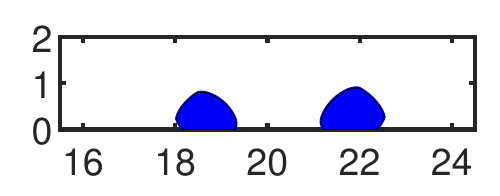}\hspace{0.5cm}
\includegraphics[width=0.3\textwidth]{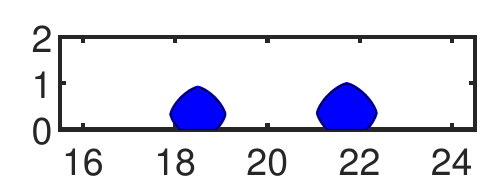}\hspace{0.5cm}
\includegraphics[width=0.48\textwidth]{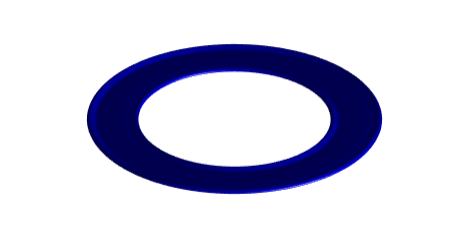}
\includegraphics[width=0.48\textwidth]{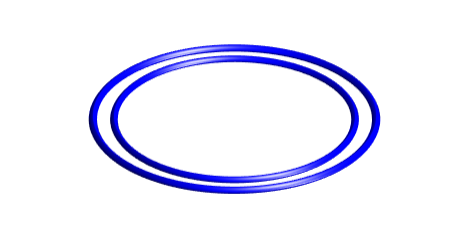}
\caption{On the upper panel we present the generating curves $\Gamma^m$ at $t = 0.2, 0.4, 0.64, 0.7, 0.74, 2.64$. On the lower panel, we show the corresponding axisymmetric surface $S^m$ generated by $\Gamma^m$ at $t = 0.2, 2.64$. The initial data $\vec X^0(\rho) = (20 + 8\cos(\pi\rho), 0.14\sin(\pi\rho))$. Here $J = 100$, $\ttau = 1/50$, $\sigma = -0.6$, $\varepsilon = 0.001$. $\beta = 0.07$. }
\label{fig:10}
\end{figure}

\begin{figure}[!htp]
\centering
\includegraphics[width=0.3\textwidth]{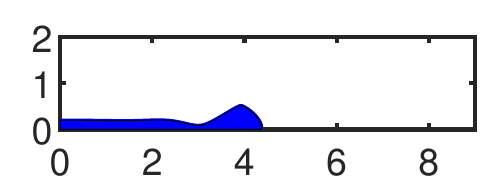}\hspace{0.5cm}
\includegraphics[width=0.3\textwidth]{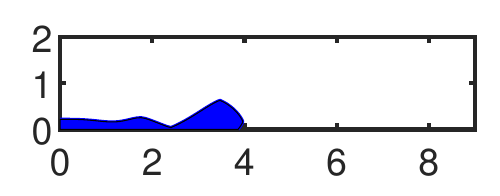}\hspace{0.5cm}
\includegraphics[width=0.3\textwidth]{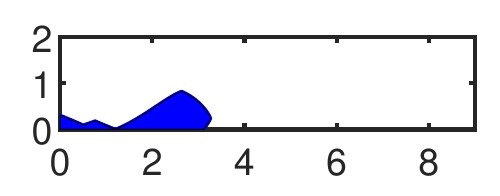}\hspace{0.5cm}
\includegraphics[width=0.3\textwidth]{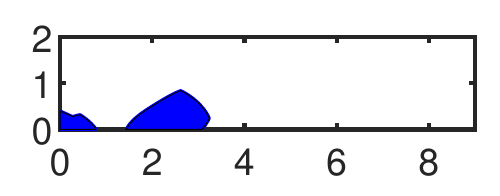}\hspace{0.5cm}
\includegraphics[width=0.3\textwidth]{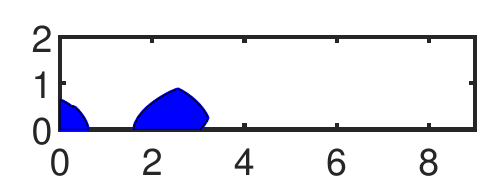}\hspace{0.5cm}
\includegraphics[width=0.3\textwidth]{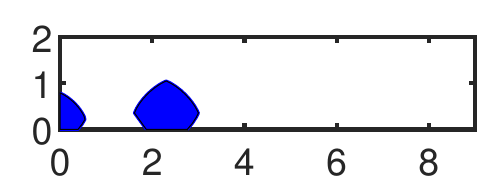}\hspace{0.9cm}
\includegraphics[width=1\textwidth]{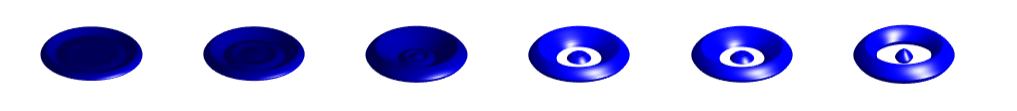}
\caption{On the upper panel we present the generating curves $\Gamma^m$ at $t = 0.05, 0.1, 0.27, 0.28, 0.29, 0.34$. On the lower panel, we show the corresponding axisymmetric surface $S^m$ generated by $\Gamma^m$. The initial data $\vec X^0(\rho) = (6\cos(\pi\rho/2), 0.2\sin(\pi\rho/2))$. Here $J = 50$, $\ttau = 1/200$, $\sigma = -0.6$, $\varepsilon = 0.001$. $\beta = 0.1$. }
\label{fig:11}
\end{figure}

\section{Conclusions}\label{sec7}
In this work, 
through the application of thermodynamic variation principles for a new-defined regularized total free energy, we derive a sharp-interface model to capture the dynamics of axisymmtric SSD with strong anisotropies.
Furthermore, we develop structure-preserving parametric finite element approximation for the sharp-interface model, ensuring both volume conservation and energy stability.
The main motivation for constructing the regularized system in this work is the inclusion of the Willmore regularization term, which can ensure the well-posedness of the model.
By constructing two novel geometric relationships, we establish an equivalent regularized sharp-interface model and further develop a structure-preserving numerical scheme tailored to this new model, which fills a gap in the existing theoretical framework. 
A large number of numerical experiments demonstrate the accuracy and structure-preserving properties of the numerical scheme.
Most importantly, compared to the system without the regularization term, numerical simulations show that the scheme maintains good mesh quality throughout the evolution process. 

\section*{Acknowledgments}
This work has been funded by the National Natural Science Foundation of China [Nos. 11801527, U23A2065].

\bibliographystyle{elsarticle-num}
\bibliography{thebib}
\end{document}